\theoremstyle{plain}
\newtheorem{teo}{Theorem}[section]
\newtheorem{pps}[teo]{Proposition}
\newtheorem{cor}[teo]{Corollary}
\newtheorem{lem}[teo]{Lemma}
\newtheorem*{dfn}{Definition}
\theoremstyle{definition}
\newtheorem{ex}[teo]{Example}
\newtheorem*{remark}{Remark}
\DeclareMathOperator{\Ad}{Ad}
\DeclareMathOperator{\ad}{ad}
\DeclareMathOperator{\Nij}{Nij}
\DeclareMathOperator{\tr}{tr}
\DeclareMathOperator{\id}{id}
\begin{document}

\title{Invariant Generalized Complex Structures on Flag Manifolds}
\author{Carlos A. B. Varea\thanks{This work was supported by FAPESP grant 2016/07029-2. CV also was financed in party by the Coordena\c{c}\~ao de Aperfei\c{c}oamento de Pessoal de Nível Superior - Brasil (CAPES) - Finance Code 001.} \and Luiz A. B. San Martin}
\date{}

\maketitle

\begin{abstract}
Let $G$ be a complex semi-simple Lie group and form its maximal flag
manifold $\mathbb{F}=G/P=U/T$ where $P$ is a minimal parabolic subgroup, $U$
a compact real form and $T=U\cap P$ a maximal torus of $U$. The aim of this
paper is to study invariant generalized complex structures on $\mathbb{F}$.
We describe the invariant generalized almost complex structures on $\mathbb{F%
}$ and classify which one is integrable. The problem reduces to the study of
invariant $4$-dimensional generalized almost complex structures restricted
to each root space, and for integrability we analyse the Nijenhuis operator
for a triple of roots such that its sum is zero. We also conducted a study
about twisted generalized complex structures. We define a new bracket
`twisted' by a closed $3$-form $\Omega $ and also define the Nijenhuis
operator twisted by $\Omega $. We classify the $\Omega $-integrable
generalized complex structure.
\end{abstract}
\noindent
\\%
\textit{AMS 2010 subject classification:} 14M15, 22F30, 53D18.

\noindent%
\textit{Key words and phrases:} Flag manifolds, Homogeneous space, Semi-simple Lie groups, Generalized complex structures.

\tableofcontents


\section{Introduction}

The subject matter of this paper are invariant generalized complex
structures on flag manifolds of semi-simple Lie groups. A generalized
complex structure is a differential geometric structure introduced by Hitchin \cite{hitchin} and further developed by Gualtieri \cite{Gualtieri}, with the purpose of studying complex and symplectic structures in a unique
framework. We refer to Gualtieri \cite{Gualtieri},\cite{gualtieri2} and Cavalcanti \cite{gil} 
for the foundations of the theory of generalized complex structures.

In this paper we consider the maximal flag manifolds of the complex Lie
groups. Let $\mathfrak{g}$ be a complex semi-simple Lie algebra, $G$ a
conected Lie group with Lie algebra $\mathfrak{g}$. Then its maximal flag
manifold is the homogeneous space $\mathbb{F}=G/P$ where $P$ is a Borel
subgroup of $G$ (minimal parabolic subgroup). If $U$ is a compact real form
of $G$ then $U$ acts transitively on $\mathbb{F}$ so that we have also the
homogeneous space $\mathbb{F}=U/T$ where $T=P\cap U$ is a maximal torus of $%
U $. We are concerned with $U$-invariant structures on $\mathbb{F}$ (we do not expect to have $G$-invariant structures, although there are some cases we do not analyze them carefully). 

Our approach to study invariant structures is to reduce the problem at the
origin $x_{0}$ of $\mathbb{F}=U/T=G/P$. A $U$-invariant almost generalized
complex structure on $\mathbb{F}$ is completely determined by a complex
structure $\mathcal{J}$ on the vector space $T_{x_{0}}\mathbb{F}\oplus
T_{x_{0}}^{\ast }\mathbb{F}$ which commutes with the isotropy representation
of $T$.

In this approach the first step is to determine the $T$-invariant complex
structures $\mathcal{J}$. We do this by decomposing the Lie algebra $%
\mathfrak{u}$ of $U$ as $\mathfrak{u}=\mathfrak{t}\oplus \mathfrak{m}$ where 
$\mathfrak{t}$ is the Lie algebra of $T$ (which is a Cartan subalgebra) and 
\begin{equation*}
\mathfrak{m}=\sum_{\alpha }\mathfrak{u}_{\alpha }
\end{equation*}%
is the sum of the root spaces in $\mathfrak{u}$, that is, $\mathfrak{u}%
_{\alpha }=\left( \mathfrak{g}_{\alpha }+\mathfrak{g}_{-\alpha }\right) \cap 
\mathfrak{u}$ and $\mathfrak{g}_{\alpha }$ is the root space in the complex
Lie algebra $\mathfrak{g}$. This way $T_{x_{0}}\mathbb{F}\oplus
T_{x_{0}}^{\ast }\mathbb{F}$ becomes identified with the sum of two copies
of $\mathfrak{m}$, namely $T_{x_{0}}\mathbb{F}\oplus T_{x_{0}}^{\ast }%
\mathbb{F}\approx \mathfrak{m}\oplus \mathfrak{m}^\ast$ with 
$\mathfrak{m}^{\ast }=\sum_{\alpha }\mathfrak{u}%
_{\alpha }^{\ast }$ (see below for the precise realization of the duals $%
\mathfrak{u}_{\alpha }^{\ast }$). This way we write an invariant complex
structure $\mathcal{J}$ on $\mathfrak{m}\oplus \mathfrak{m}^{\ast }$ as a
direct sum $\mathcal{J}=\oplus _{\alpha }\mathcal{J}_{\alpha }$ where each $%
\mathcal{J}_{\mathcal{\alpha }}$ is an invariant complex structure in $%
\mathfrak{u}_{\alpha }\oplus \mathfrak{u}_{\alpha }^{\ast }$. The set of
invariant complex structures in $\mathfrak{u}_{\alpha }\oplus \mathfrak{u}%
_{\alpha }^{\ast }$ is parametrized by $S^{1}$ so that the set of invariant $%
\mathcal{J}$'s is parametrized by $\left( S^{1}\right) ^{\dim \mathfrak{m}}$%
. The $S^{1}$-parametrization of the complex structures in $\mathfrak{u}%
_{\alpha }\oplus \mathfrak{u}_{\alpha }^{\ast }$ distinguishes the cases
where $\mathcal{J}_{\alpha }$ comes from a complex or a symplectic structure
in $\mathfrak{m}$.

Related to the generalized complex structures are the Dirac structures. An
invariant Dirac structure in the homogeneous space $U/T$ is determined by an
isotropic subspace $L\subset \mathfrak{m}_{\mathbb{C}}\oplus \mathfrak{m}_{%
\mathbb{C}}^{\ast }$ where the complexified spaces are given by $\mathfrak{m}%
_{\mathbb{C}}=\sum_{\alpha }\mathfrak{g}_{\alpha }$ and $\mathfrak{m}_{%
\mathbb{C}}^{\ast }=\sum_{\alpha }\mathfrak{g}_{\alpha }^{\ast }$. Again we
can write $L=\oplus _{\alpha }L_{\alpha }$ with $L_{\alpha }\subset 
\mathfrak{g}_{\alpha }\oplus \mathfrak{g}_{\alpha }^{\ast }$ so that the set
of invariant Dirac structures become parametrized by $\left( S^{1}\right)
^{\dim \mathfrak{m}}$ as well.

Once we have the algebraic description of the invariant structures we
proceed to analyze their integrability both in the twisted and nontwisted
cases. We write algebraic equations for the integrability conditions. In
solving them we get a classification of the generalized complex structures
in the maximal flag manifolds. This is done by writing down the subset $I$
of the parameter space $\left( S^{1}\right) ^{\dim \mathfrak{m}}$ such that
the generalized almost complex structure determined by $\mathcal{J}$ is
integrable if and only if $\mathcal{J}\in I$. The same method works to get
twisted integrability with respect to an invariant closed $3$-form $H$.

A previous classification of (nontwisted) integrable structures were
provided by Milburn \cite{milb} relying on the differential $2$-form that
defines the a generalized almost complex structure. Our approach on the
other hand looks at the root spaces components of the structures in the same
spirit as in the classical papers by Borel \cite{bor} and Wolf-Gray \cite%
{gw1} and \cite{gw} (see also \cite{smartinecaio}) for the description of
the invariant complex structures. 

As to the contents of the paper in Section $2$ we present a brief
introduction to generalized complex geometry and flag manifolds, which are
the basic concepts used throughout all paper. After this, we obtain a result
which allow us to reduce the study of an invariant generalized almost
complex structure on a flag manifold $\mathbb{F}$ to the study of $4$%
-dimensional invariant generalized almost complex structures. And, in the
end of the section we describe such structures.

Section $3$ is dedicated to do a classification of all invariant generalized
almost complex structure on a flag manifold, using the results obtained in
the previous section. More than that, we classify which one of such
structure is integrable or not, by means of the Nijenhuis operator.

In Section $4$, we do a study about generalized complex structures twisted
by a closed $3$-form. We classify the invariant generalized almost complex
structures which are $\Omega$-integrable, where $\Omega$ is an invariant
closed $3$-form.

\section{Generalized Complex Geometry}

In this section we introduce the basic definitions of generalized complex
geometry. For more details see Gualtieri \cite{Gualtieri}. Let $M$ be a
smooth $n$-dimensional manifold, then the sum of the tangent and cotangent
bundle $TM\oplus T^{\ast }M$ is endowed with a natural symmetric bilinear
form with signature $(n,n)$ defined by 
\begin{equation*}
\langle X+\xi ,Y+\eta \rangle =\frac{1}{2}(\xi (Y)+\eta (X)).
\end{equation*}%
Furthermore, the Courant bracket is a skew-symmetric bracket defined on
smooth sections of $TM\oplus T^{\ast }M$ by 
\begin{equation*}
\lbrack X+\xi ,Y+\eta ]=[X,Y]+\mathcal{L}_{X}\eta -\mathcal{L}_{Y}\xi -\frac{%
1}{2}d\left( i_{X}\eta -i_{Y}\xi \right) .
\end{equation*}

\begin{dfn}
A generalized almost complex structure on $M$ is a map $\mathcal{J}\colon
TM\oplus T^{\ast }M\rightarrow TM\oplus T^{\ast }M$ such that $\mathcal{J}%
^{2}=-1$ and $\mathcal{J}$ is an isometry of the bilinear form $\langle
\cdot ,\cdot \rangle $. The generalized almost complex structure $\mathcal{J}
$ is said to be integrable to a generalized complex structure when its $i$%
-eigenbundle $L\subset (TM\oplus T^{\ast }M)\otimes \mathbb{C}$ is Courant
involutive.
\end{dfn}

Note that, given $L$ a maximal isotropic sub-bundle of $TM\oplus T^{\ast }M$
(or its complexification) then $L$ is Courant involutive if and only if $\Nij%
|_{L}=0$, where $\Nij$ is the Nijenhuis operator: 
\begin{equation*}
\Nij(A,B,C)=\frac{1}{3}\left( \langle \lbrack A,B],C\rangle +\langle \lbrack
B,C],A\rangle +\langle \lbrack C,A],B\rangle \right) .
\end{equation*}

The basic examples of generalized complex structures come from the complex
and symplectic structures. If $J$ and $\omega $ are complex and symplectic
structures respectively on $M$, then 
\begin{equation*}
\mathcal{J}_{J}=\left( 
\begin{array}{cc}
-J & 0 \\ 
0 & J^{\ast }%
\end{array}%
\right) \ \mathrm{and}\ \mathcal{J}_{\omega }=\left( 
\begin{array}{cc}
0 & -\omega ^{-1} \\ 
\omega & 0%
\end{array}%
\right)
\end{equation*}%
are generalized complex structures on $M$.

\subsubsection*{Twisted generalized complex structures}

As was developed by \v{S}evera and Weinstein, in \cite{severa}, the Courant
bracket can be `twisted' by a closed $3$-form $H$. Given a $3$-form $H$,
define another bracket $[\cdot ,\cdot ]_{H}$ on $T\oplus T^{\ast }$, by 
\begin{equation*}
\lbrack X+\xi ,Y+\eta ]_{H}=[X+\xi ,Y+\eta ]+i_{Y}i_{X}H.
\end{equation*}%
Then, defining $\Nij_{H}$ using the usual formula but replacing $[\cdot
,\cdot ]$ by $[\cdot ,\cdot ]_{H}$, one gets 
\begin{equation*}
\Nij_{H}(A,B,C)=\Nij(A,B,C)+H(X,Y,Z),
\end{equation*}%
where $A=X+\xi $, $B=Y+\eta $ and $C=Z+\zeta $.

\begin{dfn}
A generalized almost complex structure $\mathcal{J}$ is said to be a twisted
generalized complex structure with respect to the closed $3$-form $H$ (or
just $H$-integrable) when its $i$-eigenbundle $L$ is involutive with respect
to the $H$-twisted Courant bracket.
\end{dfn}

Analogously to the untwisted case, one can prove that $L$ is involutive with
respect to the $H$-twisted Courant bracket if and only if $\Nij_H |_L = 0$.

\subsection{Flag manifolds}

\label{secaoflag}Let $G$ be a complex semi-simple Lie group with Lie algebra $%
\mathfrak{g}$. Given a Cartan subalgebra $\mathfrak{h}$ of $\mathfrak{g}$
denote by $\Pi $ the set of roots of the pair $(\mathfrak{g},\mathfrak{h})$,
so that 
\begin{equation*}
\mathfrak{g}=\mathfrak{h}\oplus \sum_{\alpha \in \Pi }\mathfrak{g}_{\alpha },
\end{equation*}%
where $\mathfrak{g}_{\alpha }=\{X\in \mathfrak{g}:\forall H\in \mathfrak{h}%
,[H,X]=\alpha (H)X\}$ denotes the corresponding one-dimensional root space.
The Cartan--Killing form $\langle X,Y\rangle =\tr(\ad(X)\ad(Y))$ of $%
\mathfrak{g}$ is nondegenerate on $\mathfrak{h}$. Given $\alpha \in 
\mathfrak{h}^{\ast }$ we let $H_{\alpha }$ be defined by $\alpha (\cdot
)=\langle H_{\alpha },\cdot \rangle $, and denote by $\mathfrak{h}_{\mathbb{R%
}}$ the subspace spanned over $\mathbb{R}$ by $H_{\alpha }$, $\alpha \in \Pi 
$.

We fix a Weyl basis of $\mathfrak{g}$ which amounts to giving $X_\alpha \in 
\mathfrak{g}_\alpha$ such that $\langle X_\alpha, X_{-\alpha} \rangle = 1$,
and $[X_\alpha, X_{\beta}]=m_{\alpha,\beta}X_{\alpha+\beta}$ with $%
m_{\alpha,\beta}\in \mathbb{R}$, $m_{-\alpha,-\beta}=-m_{\alpha,\beta}$ and $%
m_{\alpha,\beta}=0$ if $\alpha+\beta$ is not a root.

Let $\Pi ^{+}\subset \Pi $ be a choice of positive roots, denote by $\Sigma $
the corresponding simple system of roots and put $\mathfrak{p}=\mathfrak{h}%
\oplus \sum_{\alpha \in \Pi ^{+}}\mathfrak{g}_{\alpha }$ for the Borel
subalgebra generated by $\Pi ^{+}$. Denote by $\mathfrak{n}^{-}=\sum_{\alpha
\in \Pi ^{+}}\mathfrak{g}_{-\alpha }$ and $\mathfrak{n}^{+}=\sum_{\alpha \in
\Pi ^{+}}\mathfrak{g}_{\alpha }$. Thus a maximal (complex) flag manifold is $%
\mathbb{F}=G/P$ where $G$ is a complex semi-simple Lie group with Lie
algebra $\mathfrak{g}$ and $P$ is the normalizer of $\mathfrak{p}$ in $G$.
Then $\mathfrak{n}^{-}$ identifies to the tangent space $T_{b_{0}}\mathbb{F}$
at the origin $b_{0}$ of $\mathbb{F}$ and $\mathfrak{n}^{+}$ to the
cotangent space $T_{x_{0}}^{\ast }\mathbb{F}$ by means of the
Cartan--Killing form.

Let $\mathfrak{u}$ be a compact real form of $\mathfrak{g}$, to know, the
real subalgebra 
\begin{equation*}
\mathfrak{u}=\mathrm{span}_{\mathbb{R}}\{i\mathfrak{h}_{\mathbb{R}%
},A_{\alpha },S_{\alpha }\colon \alpha \in \Pi ^{+}\}
\end{equation*}%
where $A_{\alpha }=X_{\alpha }-X_{-\alpha }$ and $S_{\alpha }=i(X_{\alpha
}+X_{-\alpha })$. Denote by $U=\exp \mathfrak{u}$ the correspondent compact
real form of $G$. Then we can write $\mathbb{F}=U/T$ where $T=P\cap U$ is a
maximal torus of $U$.

We can also identify $\mathbb{F}=\Ad(U)H$ where $H$ is a regular element of $%
\mathfrak{t}=\mathcal{L}(T)$. In this case, we are identifying the origin $%
b_{0}$ of $\mathbb{F}$ with $H$. The tangent space of $\mathbb{F}$ at the
origin is $\sum_{\alpha \in \Pi ^{+}}\mathfrak{u}_{\alpha }$, where $%
\mathfrak{u}_{\alpha }=(\mathfrak{g}_{-\alpha }\oplus \mathfrak{g}_{\alpha
})\cap \mathfrak{u}=\mathrm{span}_{\mathbb{R}}\{A_{\alpha },S_{\alpha }\}$.

We have several ways to relate a Lie algebra to its dual. Here, we are going
to use the symplectic form of Kostant--Kirillov--Souriaux (KKS), which is
defined on a coadjoint orbit $G\cdot \alpha $, with $\alpha \in \mathfrak{g}%
^{\ast }$, by 
\begin{equation*}
\Omega _{\alpha }\left( \widetilde{X}(\alpha ),\widetilde{Y}(\alpha )\right)
=\alpha \lbrack X,Y]
\end{equation*}%
with $X,Y\in \mathfrak{g}$. In our case, we can identify the adjoint and
coadjoint orbits. In this case, we have that 
\begin{equation*}
\Omega _{b_{0}}(\widetilde{X},\widetilde{Y})=\langle H,[X,Y]\rangle
\end{equation*}%
with $X,Y\in \mathfrak{u}$.

We have $\mathfrak{u}\approx \mathfrak{u}^{\ast }$, more than that we have $%
\mathfrak{u}_{\alpha }\approx \mathfrak{u}_{\alpha }^{\ast }$. The elements
of $\mathfrak{u}_{\alpha }^{\ast }$ will be denoted by $A_{\alpha }^{\ast }$
e $S_{\alpha }^{\ast }$. Such isomorphism is given from the KKS symplectic
form, where we denote 
\begin{equation*}
X^{\ast }=\Theta _{X}(\cdot )=\frac{1}{\langle H,H_{\alpha
}\rangle }\Omega _{b_{0}}(\widetilde{X},\cdot )
\end{equation*}%
for $X^{\ast }\in \mathfrak{u}_{\alpha }^{\ast }$. For convenience, we will
write $k_{\alpha }=\frac{1}{\langle H,H_{\alpha }\rangle }$.

\subsection{Invariant generalized almost complex structures}

When $M$ is a flag manifold one can consider invariant generalized almost
complex structures. A $U$-invariant generalized almost complex structure $%
\mathcal{J}_\ast$ is completely determined by its value $\mathcal{J} \colon 
\mathfrak{n}^- \oplus \left( \mathfrak{n}^-\right) ^\ast \rightarrow 
\mathfrak{n}^- \oplus \left( \mathfrak{n}^-\right) ^\ast$ in the sum of the
tangent and cotangent space at the origin. The map $\mathcal{J}$ satisfies $%
\mathcal{J}^2 = -1$, $\mathcal{J}$ is orthogonal to the inner product $%
\langle \cdot, \cdot \rangle$ and commute with the adjoint action of $T$ on $%
\mathfrak{n}^- \oplus \left( \mathfrak{n}^-\right) ^\ast$.

We know that a generalized almost complex structure is related to an
isotropic subspace. So we are interested in invariant isotropic subspaces on 
$\mathfrak{n}^{-}\oplus \left( \mathfrak{n}^{-}\right) ^{\ast }\approx 
\mathfrak{n}^{-}\oplus \mathfrak{n}^{+}$. In this case, the natural bilinear
form is just the Cartan--Killing form.

\begin{pps}
Let $L$ be an invariant subspace of $\mathfrak{n}^{-}\oplus \mathfrak{n}^{+}$%
. Then 
\begin{equation*}
L=\sum_{\alpha >0}L\cap \left( \mathfrak{g}_{-\alpha }\oplus \mathfrak{g}%
_{\alpha }\right) .
\end{equation*}
\end{pps}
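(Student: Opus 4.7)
The plan is to reduce the statement to the standard weight--space decomposition of a representation of a torus. Under the adjoint action, $T$ acts on $\mathfrak{n}^{-}\oplus\mathfrak{n}^{+}$ linearly. Because $\mathfrak{t}=i\mathfrak{h}_{\mathbb{R}}$ and each root space $\mathfrak{g}_{\beta}$ is a one--dimensional eigenspace of $\operatorname{ad}(H)$ with eigenvalue $\beta(H)$ for every $H\in\mathfrak{h}$, exponentiating gives
\[
\mathfrak{n}^{-}\oplus\mathfrak{n}^{+}=\bigoplus_{\beta\in\Pi}\mathfrak{g}_{\beta},
\]
where each $\mathfrak{g}_{\beta}$ is a $1$--dimensional $T$--weight space with character $e^{\beta}|_{T}$.

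The next step is to observe that the characters $e^{\beta}|_{T}$, $\beta\in\Pi$, are pairwise distinct: two distinct roots differ on $\mathfrak{h}_{\mathbb{R}}$, hence on $\mathfrak{t}$, so they define distinct characters of the torus $T$. A representation of a torus is completely reducible and its invariant subspaces are precisely the sums of its intersections with the isotypic components. Applying this to $L$ yields
\[
L=\bigoplus_{\beta\in\Pi}\bigl(L\cap\mathfrak{g}_{\beta}\bigr).
\]

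Finally, I would pair each positive root with its negative. For fixed $\alpha>0$, any $v\in L\cap(\mathfrak{g}_{-\alpha}\oplus\mathfrak{g}_{\alpha})$ writes uniquely as $v=v_{-\alpha}+v_{\alpha}$ with $v_{\pm\alpha}\in\mathfrak{g}_{\pm\alpha}$, and by the previous decomposition both components lie in $L$; this gives
\[
L\cap(\mathfrak{g}_{-\alpha}\oplus\mathfrak{g}_{\alpha})=(L\cap\mathfrak{g}_{-\alpha})\oplus(L\cap\mathfrak{g}_{\alpha}).
\]
Summing over $\alpha>0$ regroups the previous decomposition into the claimed form. There is no real obstacle here; the only substantive ingredient is the distinctness of the restricted characters, which is standard, and the whole argument is essentially a direct application of the weight decomposition for torus actions.
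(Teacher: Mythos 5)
There is a genuine gap, and it sits at the very first step: you let $T$ act on both summands of $\mathfrak{n}^{-}\oplus\mathfrak{n}^{+}$ by the plain adjoint action and then invoke pairwise distinctness of the characters $e^{\beta}$, $\beta\in\Pi$. But "invariant" here means invariant under the isotropy representation on $T_{x_0}\mathbb{F}\oplus T^{*}_{x_0}\mathbb{F}$, and on the cotangent summand $\mathfrak{n}^{+}$ this is the \emph{dual} (contragredient) of the tangent representation, not $\Ad$ itself. This is exactly the point of the paper's proof: with $\Ad^{*}(g)=\Ad(g^{-1})$ one gets, for $g=e^{iH}$,
\begin{equation*}
(\Ad\oplus\Ad^{*})(g)(X_{-\alpha}+X_{\alpha})=e^{-i\alpha(H)}(X_{-\alpha}+X_{\alpha}),
\end{equation*}
so the tangent piece $\mathfrak{g}_{-\alpha}$ and the cotangent piece $\mathfrak{g}_{\alpha}$ carry the \emph{same} character and form a single two-dimensional weight block. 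Your distinctness claim therefore fails, and with it the intermediate statement $L=\bigoplus_{\beta\in\Pi}\bigl(L\cap\mathfrak{g}_{\beta}\bigr)$, which is strictly stronger than the proposition.

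That stronger statement is not merely unproved, it is false in this setting, and the rest of the paper shows why: the invariant structures of non-complex type have $i$-eigenspaces such as $L_{\alpha}=\mathrm{span}_{\mathbb{C}}\{x_{\alpha}A_{\alpha}+(a_{\alpha}-i)A_{\alpha}^{\ast},\,x_{\alpha}S_{\alpha}+(a_{\alpha}-i)S_{\alpha}^{\ast}\}$, which are invariant (the corresponding $J$ commutes with the torus, since the torus acts by the same rotation block on $\mathfrak{u}_{\alpha}$ and on $\mathfrak{u}_{\alpha}^{\ast}$) yet intersect the individual tangent and cotangent pieces trivially. If your finer splitting held, only the diagonal (complex-type) structures could be invariant, contradicting the classification in Sections 2--3. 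The correct argument is the one the paper gives: because tangent and cotangent pieces for the root $\alpha$ are equivalent $T$-modules, the isotypic blocks are the spaces $\mathfrak{g}_{-\alpha}\oplus\mathfrak{g}_{\alpha}$ (distinct characters for distinct positive roots), and an invariant subspace decomposes only down to these blocks. Your final regrouping step is harmless, but the core of the proof rests on a weight decomposition for the wrong representation.
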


\begin{proof}
A maximal flag manifold $\mathbb{F}=U/T$ is a reductive space, then we can
look the isotropy representation as being the adjoint representation
restrict to $T$, that is, given by $\Ad|_{T}\colon T\rightarrow \mathrm{Gl}(%
\mathfrak{u})$.

Let $L\subset \mathfrak{n}^{-}\oplus \mathfrak{n}^{+}$ be an invariant
subspace for the isotropy representation, which is identified to $\Ad\oplus %
\Ad^{\ast }$ in $\mathfrak{n}^{-}\oplus \mathfrak{n}^{+}$. Since $T$ is a
torus, every $g\in T$ is written as $g=e^{iH}$ where $H\in \mathfrak{t}=%
\mathcal{L}(T)$. Let $\alpha $ be a positive root and let $X_{\alpha }\in 
\mathfrak{g}_{\alpha }$, $X_{-\alpha }\in \mathfrak{g}_{-\alpha }$, then we
have 
\begin{equation*}
\left( \Ad\oplus \Ad^{\ast }\right) (g)(X_{-\alpha }+X_{\alpha })=\Ad%
(e^{iH})X_{-\alpha }+\Ad(e^{-iH})X_{\alpha }
\end{equation*}%
because $\Ad^{\ast }(g)=\Ad(g^{-1})$. Now using the fact that $\Ad(e^{Y})=e^{%
\ad(Y)}$ follows 
\begin{eqnarray*}
\left( \Ad\oplus \Ad^{\ast }\right) (g)(X_{-\alpha }+X_{\alpha }) &=&e^{i\ad%
(H)}X_{-\alpha }+e^{-i\ad(H)}X_{\alpha } \\
&=&e^{-i\alpha (H)}X_{-\alpha }+e^{-i\alpha (H)}X_{\alpha } \\
&=&e^{-i\alpha (H)}(X_{-\alpha }+X_{\alpha })\in \mathfrak{g}_{-\alpha
}\oplus \mathfrak{g}_{\alpha }.
\end{eqnarray*}%
Since $L$ is invariant, follows that $(\Ad\oplus \Ad^{\ast })(g)(X_{-\alpha
}+X_{\alpha })\in L$. Therefore $(\Ad\oplus \Ad^{\ast })(g)(X_{-\alpha
}+X_{\alpha })\in L\cap (\mathfrak{g}_{-\alpha }\oplus \mathfrak{g}_{\alpha
})$. Thus, given $X=\sum_{\alpha >0}(X_{-\alpha }+X_{\alpha })\in L$ by
linearity we have that $X\in L\cap (\mathfrak{g}_{-\alpha }\oplus \mathfrak{g%
}_{\alpha })$.
\end{proof}

\begin{pps}
Let $L=\sum_{\alpha >0} L\cap \left(\mathfrak{g}_{-\alpha}\oplus \mathfrak{g}%
_\alpha\right)$. Then $L$ is isotropic if and only if, for each $\alpha$, 
\begin{equation*}
L_\alpha = L\cap \left(\mathfrak{g}_{-\alpha}\oplus \mathfrak{g}%
_\alpha\right)
\end{equation*}
is an isotropic subspace.
\end{pps}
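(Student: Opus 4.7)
The plan is straightforward, exploiting the orthogonality properties of the Cartan--Killing form on root spaces. One implication is immediate: if $L$ is isotropic, then any subspace of $L$ is isotropic, so each $L_\alpha \subset L$ is isotropic.

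For the converse, I would assume each $L_\alpha$ is isotropic and take two arbitrary vectors $v,w \in L$. Using the previous proposition, I decompose
\begin{equation*}
v = \sum_{\alpha > 0} v_\alpha, \qquad w = \sum_{\beta > 0} w_\beta,
\end{equation*}
where $v_\alpha, w_\alpha \in L_\alpha \subset \mathfrak{g}_{-\alpha} \oplus \mathfrak{g}_\alpha$. Then by bilinearity,
\begin{equation*}
\langle v, w \rangle = \sum_{\alpha,\beta > 0} \langle v_\alpha, w_\beta \rangle.
\end{equation*}

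The key step is to observe that the Cartan--Killing form satisfies $\langle \mathfrak{g}_\gamma, \mathfrak{g}_\delta \rangle = 0$ whenever $\gamma + \delta \neq 0$. Hence for $v_\alpha \in \mathfrak{g}_{-\alpha} \oplus \mathfrak{g}_\alpha$ and $w_\beta \in \mathfrak{g}_{-\beta} \oplus \mathfrak{g}_\beta$, the pairing $\langle v_\alpha, w_\beta \rangle$ vanishes unless $\{\alpha, -\alpha\} \cap \{\beta, -\beta\} \neq \emptyset$, which, since $\alpha, \beta > 0$, forces $\alpha = \beta$. Therefore
\begin{equation*}
\langle v, w \rangle = \sum_{\alpha > 0} \langle v_\alpha, w_\alpha \rangle = 0,
\end{equation*}
since each $L_\alpha$ is isotropic. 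This shows $L$ is isotropic.

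There is no real obstacle here; the argument is essentially the orthogonality of distinct root spaces under the Cartan--Killing form, which decouples the isotropy condition root by root. The only point deserving care is making explicit why cross-terms $\langle v_\alpha, w_\beta\rangle$ with $\alpha \neq \beta$ vanish, which I would justify by noting that $\mathfrak{g}_{-\alpha}\oplus\mathfrak{g}_\alpha$ and $\mathfrak{g}_{-\beta}\oplus\mathfrak{g}_\beta$ are Killing-orthogonal when $\alpha\neq \beta$ are both positive.
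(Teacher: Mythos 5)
Your proposal is correct and follows essentially the same route as the paper: the forward direction by restriction, and the converse by decomposing elements of $L$ and using that $\langle \mathfrak{g}_\gamma,\mathfrak{g}_\delta\rangle=0$ unless $\gamma+\delta=0$, so that for positive roots $\alpha\neq\beta$ the spaces $\mathfrak{g}_{-\alpha}\oplus\mathfrak{g}_\alpha$ and $\mathfrak{g}_{-\beta}\oplus\mathfrak{g}_\beta$ are Killing-orthogonal. No gaps.
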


\begin{proof}
If $L$ is isotropic, then we have $\langle X,Y\rangle = 0$ for all $X,Y\in L$%
. In particular, if $X,Y\in L_\alpha$ then $\langle X,Y\rangle = 0$.
Therefore $L_\alpha$ is isotropic (for each $\alpha$).

On the other hand, suppose that $L_\alpha$ is isotropic for each $\alpha$,
then $\langle X,Y\rangle = 0$ for all $X,Y\in L_\alpha$. Now, when $\alpha
\not= \beta$ we have $\langle X,Y\rangle = 0$ for all $X\in L_\alpha$ and $%
Y\in L_\beta$, because $\langle \mathfrak{g}_\alpha,\mathfrak{g}%
_\beta\rangle = 0$ unless $\beta = -\alpha$. Therefore $L$ is isotropic, as
required.
\end{proof}

Observe that $\mathfrak{g}_{-\alpha }\oplus \mathfrak{g}_{\alpha }$ is a $2$%
-dimensional complex subspace or a $4$-dimensional real subspace. The
Cartan--Killing form restricted to $\mathfrak{g}_{-\alpha }\oplus \mathfrak{g%
}_{\alpha }$ is a symmetric bilinear form with signature $2$ (over $\mathbb{R%
}$) with matrix 
\begin{equation*}
\left( 
\begin{array}{cccc}
0 & 0 & 1 & 0 \\ 
0 & 0 & 0 & 1 \\ 
1 & 0 & 0 & 0 \\ 
0 & 1 & 0 & 0%
\end{array}%
\right)
\end{equation*}%
for some basis. Moreover, note that if $L$ is maximal isotropic on $%
\mathfrak{n}^{-}\oplus \mathfrak{n}^{+}$, then each subspace $L_{\alpha }$
is maximal isotropic on $\mathfrak{g}_{-\alpha }\oplus \mathfrak{g}_{\alpha
} $, which means $\dim _{\mathbb{R}}L_{\alpha }=2$.

In this way, we are motivated to do a detailed description of the $2$%
-dimensional isotropic subspaces on $\mathfrak{g}_{-\alpha} \oplus \mathfrak{%
g}_{\alpha}$. Actually, we are going to describe all invariant generalized
almost complex structure on $\mathbb{R}^4$.

\subsubsection*{Invariant $4$-dimensional generalized almost complex
structure}

Let $Q$ be a bilinear form with matrix 
\begin{equation*}
B=\left( 
\begin{array}{cc}
0_{2\times 2} & 1_{2\times 2} \\ 
1_{2\times 2} & 0_{2\times 2}%
\end{array}
\right).
\end{equation*}
We are interested in describing the structures $J$ such that

\begin{enumerate}
\item $J$ is a complex structure: $J^2 = -1$;

\item $J$ is an isometry of $Q$, that is, $Q(Jx,Jy)=Q(x,y)$, that is, $%
J^{T}BJ=B$, which is equivalent to $J^{T}B=-BJ$.
\end{enumerate}

If $J$ satisfies $J^TB=-BJ$, we have 
\begin{equation*}
J= \left(%
\begin{array}{cc}
\alpha & \beta \\ 
\gamma & -\alpha ^T%
\end{array}
\right)
\end{equation*}
where $\beta + \beta^T = \gamma +\gamma ^T = 0$. Thus 
\begin{equation*}
J^2 = \left( 
\begin{array}{cc}
\alpha ^2 +\beta \gamma & \alpha \beta -\beta \alpha ^T \\ 
\gamma \alpha -\alpha^T \gamma & \gamma \beta +(\alpha ^2)^T%
\end{array}
\right)
\end{equation*}
and if we ask that $J^2 = -1$ then $0 = \alpha \beta - \beta \alpha ^T =
\alpha \beta + (\alpha \beta)^T$, that is, $\alpha \beta$ is a
skew-symmetric matrix. Writing 
\begin{equation*}
\alpha = \left(%
\begin{array}{cc}
a & b \\ 
c & d%
\end{array}
\right) \ \mathrm{ and } \ \beta = \left( 
\begin{array}{cc}
0 & -x \\ 
x & 0%
\end{array}
\right),
\end{equation*}
we have 
\begin{equation*}
\alpha \beta = \left( 
\begin{array}{cc}
bx & -ax \\ 
dx & -cx%
\end{array}
\right).
\end{equation*}
Therefore, if $\alpha \beta$ is skew-symmetric and $\beta \not= 0$, then $%
\alpha$ a scalar matrix, i.e., $\alpha$ is a diagonal matrix with $a=d\in 
\mathbb{R}$. Similarly using the expression $\gamma \alpha - \alpha^T \gamma
= 0$ we have the same conclusion with $\gamma$ instead $\beta$.
Reciprocally, if $\beta$ and $\gamma$ are skew-symmetric matrices and $%
\alpha $ is a scalar matrix, then $\alpha \beta$ and $\gamma \alpha$ are
skew-symmetric matrices.

The last statement ensures that $\beta=0$ if and only if $\gamma =0$. In
fact, if $\beta = 0$ and $\gamma \not=0$, then in the diagonal of $J^2$
appears that $\alpha ^2$ with $\alpha$ a real scalar matrix and it becomes
impossible to have $J^2 = -1$.

Now, if $\beta = \gamma = 0$ the only possibility is $\alpha ^2 = -1$.
Thereby, the complex structures that are isometries of $Q$ are given by:

\begin{enumerate}
\item Diagonal: 
\begin{equation*}
J = \left( 
\begin{array}{cc}
\alpha & 0 \\ 
0 & -\alpha^T%
\end{array}%
\right)
\end{equation*}
where $\alpha^2 = -1$;

\item Non-diagonal: 
\begin{equation*}
J=\left( 
\begin{array}{cc}
a\cdot \id & \beta \\ 
\gamma & -a\cdot \id%
\end{array}%
\right)
\end{equation*}%
with 
\begin{equation*}
\beta =\left( 
\begin{array}{cc}
0 & -x \\ 
x & 0%
\end{array}%
\right) \ \mathnormal{\ }\mathrm{and}\ \gamma =\left( 
\begin{array}{cc}
0 & -y \\ 
y & 0%
\end{array}%
\right)
\end{equation*}%
where $\alpha ^{2}+\beta \gamma =-1$, that is, $a^{2}-xy=-1$.
\end{enumerate}

However, we are looking for the complex structures which are invariant. In
our case, we need $J$ to be invariant by the torus action. Let $T$ be the
group of diagonal matrices in blocks 
\begin{equation*}
k=\left( 
\begin{array}{cc}
r_{t} & 0 \\ 
0 & r_{t}%
\end{array}%
\right)
\end{equation*}%
with 
\begin{equation*}
r_{t}=\left( 
\begin{array}{cc}
\cos t & -\sin t \\ 
\sin t & \cos t%
\end{array}%
\right)
\end{equation*}%
the rotation matrix. A complex structure is $T$-invariant if $kJk^{-1}=J$
for all $k\in T$. Let $J$ be an isometry of $Q$ like before. Then 
\begin{equation*}
kJk^{-1}=\left( 
\begin{array}{cc}
r_{t}\alpha r_{t}^{-1} & r_{t}\beta r_{t}^{-1} \\ 
r_{t}\gamma r_{t}^{-1} & -r_{t}\alpha ^{T}r_{t}^{-1}%
\end{array}%
\right) =\left( 
\begin{array}{cc}
r_{t}\alpha r_{t}^{-1} & \beta \\ 
\gamma & -r_{t}\alpha ^{T}r_{t}^{-1}%
\end{array}%
\right)
\end{equation*}%
because $\beta $ and $\gamma $ are skew-symmetric and therefore commutes
with the rotations.

Now, $\alpha $ is a $2\times 2$ matrix that commutes with the rotations if 
\begin{equation*}
\alpha =\left( 
\begin{array}{cc}
a & -b \\ 
b & a%
\end{array}%
\right) .
\end{equation*}%
If furthermore $\alpha ^{2}=-1$ then $a=0$ and $b=\pm 1$, so 
\begin{equation*}
\alpha =\left( 
\begin{array}{cc}
0 & -1 \\ 
1 & 0%
\end{array}%
\right) \ \mathnormal{\ }\mathrm{or}\ \alpha =\left( 
\begin{array}{cc}
0 & 1 \\ 
-1 & 0%
\end{array}%
\right) .
\end{equation*}%
Hence, the complex structures that are isometries of $Q$ and invariants by $%
T $ are given by

\begin{enumerate}
\item Diagonal: $J=\pm J_{0}$ where 
\begin{equation*}
J_{0}=\left( 
\begin{array}{cc}
\alpha & 0 \\ 
0 & \alpha%
\end{array}%
\right) \ \mathnormal{\ }\mathrm{with}\ \alpha =\left( 
\begin{array}{cc}
0 & -1 \\ 
1 & 0%
\end{array}%
\right) .
\end{equation*}

\item Non-diagonal: 
\begin{equation*}
J=\left( 
\begin{array}{cc}
a\cdot \id & \beta \\ 
\gamma & -a\cdot \id%
\end{array}%
\right)
\end{equation*}%
with 
\begin{equation*}
\beta =\left( 
\begin{array}{cc}
0 & -x \\ 
x & 0%
\end{array}%
\right) \ \mathnormal{\ }\mathrm{and}\ \gamma =\left( 
\begin{array}{cc}
0 & -y \\ 
y & 0%
\end{array}%
\right)
\end{equation*}%
such that $xy\not=0$ and $a^{2}-xy=-1$ (all structures of this type are $T$%
-invariant, because $\beta $ and $\gamma $ commute with the rotations and $%
\alpha $ is a scalar matrix).
\end{enumerate}

\section{Generalized Complex Structures on $\mathbb{F}$}

In the previous section we have seen that a generalized almost complex
structure on a flag manifold $\mathbb{F}$ can be described by a Dirac
structure, more specifically, by its $i$-eigenspace $L$ at the origin. More
than that, we can reduce the study of these structures to the restriction to 
$\mathfrak{u}_{\alpha }\oplus \mathfrak{u}_{\alpha }^{\ast }$, for each root 
$\alpha $, which reduces the study to $4$-dimensional generalized almost
complex structures. In this section, we will see which of these structures
are integrable.

\subsection{The Courant bracket and Nijenhuis operator}

The Courant bracket is defined by 
\begin{equation*}
\lbrack X+\xi ,Y+\eta ]=[X,Y]+\mathcal{L}_{X}\eta -\mathcal{L}_{Y}\xi -\frac{%
1}{2}d(i_{X}\eta -i_{Y}\xi ).
\end{equation*}%
Remembering our notation which was fixed in the section \ref{secaoflag}, the
isomorphism beetwen $\mathfrak{u}_{\alpha }$ and its dual $\mathfrak{u}%
_{\alpha }^{\ast }$ was given by 
\begin{equation*}
X^{\ast }=k_{\alpha }\Omega _{b_{0}}(\widetilde{X},\cdot ),
\end{equation*}%
for each $X^{\ast }\in \mathfrak{u}_{\alpha }^{\ast }$, where $\Omega $ is
the KKS symplectic form and $k_{\alpha }=\frac{1}{\langle H,H_{\alpha }\rangle }$. In order to describe the Courant bracket, observe
that for $X\in \mathfrak{u}$ and $Y\in \mathfrak{u}_{\alpha }$, 
\begin{eqnarray*}
\mathcal{L}_{X}{Y}^{\ast } &=&di_{X}{Y}^{\ast }+i_{X}d{Y}^{\ast } \\
&=&di_{X}\Theta _{Y}+i_{X}d\Theta _{Y} \\
&=&di_{X}\Theta _{Y}
\end{eqnarray*}%
because $\Omega _{b_{0}}$ is the height function. Thus, given $Z\in 
\mathfrak{u}$ we have 
\begin{eqnarray*}
d(i_{X}\Theta _{Y})Z &=&Z\left( k_{\alpha }\Omega _{b_{0}}(\widetilde{Y},%
\widetilde{X})\right) \\
&=&k_{\alpha }\Omega _{b_{0}}\left( [\widetilde{Z},\widetilde{Y}],\widetilde{%
X}\right) +k_{\alpha }\Omega _{b_{0}}\left( \widetilde{Y},[\widetilde{Z},%
\widetilde{X}]\right) \\
&=&k_{\alpha }\left( \langle H,[[Z,Y],X]\rangle +\langle H,[Y,[Z,X]]\rangle
\right) \\
&=&k_{\alpha }\langle H,[Z,[Y,X]]\rangle .
\end{eqnarray*}%
Thereby, given $Y\in \mathfrak{u}_{\alpha }$ and $W\in \mathfrak{u}_{\beta }$%
, the Courant bracket in $\mathfrak{u}\oplus \mathfrak{u}^{\ast }$ is 
\begin{eqnarray*}
\lbrack X+{Y}^{\ast },Z+{W}^{\ast }] &=&[X,Z]+\mathcal{L}_{X}{W}^{\ast }-%
\mathcal{L}_{Z}{Y}^{\ast }-\frac{1}{2}d\left( i_{X}{W}^{\ast }-i_{Z}{Y}%
^{\ast }\right) \\
&=&[X,Z]+d(i_{X}\Theta _{W})-d(i_{Z}\Theta _{Y})-\frac{1}{2}d(i_{X}\Theta
_{W}-i_{Z}\Theta _{Y}) \\
&=&[X,Z]+\frac{1}{2}d(i_{X}\Theta _{W})-\frac{1}{2}d(i_{Z}\Theta _{Y}) \\
&=&[X,Z]+\frac{1}{2}k_{\beta }\langle H,[\cdot ,[W,X]]\rangle -\frac{1}{2}%
k_{\alpha }\langle H,[\cdot ,[Y,Z]]\rangle .
\end{eqnarray*}

One natural question is when a generalized almost complex structure is a
generalized complex structure. One way to check this is verifying when the
Nijenhuis operator is zero, where the Nijenhuis operator is defined by 
\begin{equation*}
\Nij (A,B,C) = \frac{1}{3}(\langle [A,B],C\rangle + \langle [B,C],A\rangle+
\langle [C,A],B\rangle ).
\end{equation*}
Using the expression for the Courant bracket obtained above, for $A=A_1+{A}%
_2 ^\ast$, $B=B_1+{B}_2 ^\ast$ and $C=C_1+{C}_2 ^\ast$, where $A_2 ^\ast \in 
\mathfrak{u}^\ast _{\alpha}$, $B_2 ^\ast \in \mathfrak{u}^\ast _{\beta}$ and $C_2 ^\ast
\in \mathfrak{u}^\ast _{\gamma}$, we calculate each part of the expression of $%
\Nij
$: 
\begin{eqnarray*}
\langle [ A,B ],C\rangle = \langle [A_1,B_1]+ \frac{1}{2}k_\beta \langle H,
[\cdot ,[B_2,A_1]] \rangle - \frac{1}{2} k_\alpha\langle H, [\cdot
,[A_2,B_1]] \rangle, C_1+{C}_2 ^\ast \rangle \\
= \frac{1}{2} k_\gamma \langle H,[C_2,[A_1,B_1]]\rangle + \frac{1}{4}
k_\beta \langle H, [C_1 ,[B_2,A_1]] \rangle - \frac{1}{4} k_\alpha \langle
H, [C_1 ,[A_2,B_1]] \rangle .
\end{eqnarray*}
Analogously, 
\begin{eqnarray*}
\langle [ B,C ],A\rangle = \frac{1}{2} k_\alpha \langle
H,[A_2,[B_1,C_1]]\rangle + \frac{1}{4} k_\gamma \langle H, [A_1 ,[C_2,B_1]]
\rangle \\
- \frac{1}{4} k_\beta \langle H, [A_1 ,[B_2,C_1]] \rangle
\end{eqnarray*}
and 
\begin{eqnarray*}
\langle [ C,A ],B\rangle = \frac{1}{2} k_\beta \langle
H,[B_2,[C_1,A_1]]\rangle + \frac{1}{4} k_\alpha \langle H, [B_1 ,[A_2,C_1]]
\rangle \\
- \frac{1}{4} k_\gamma \langle H, [B_1 ,[C_2,A_1]] \rangle.
\end{eqnarray*}
Then, putting everything together: 
\begin{align*}
\Nij(A,B,C) & = \frac{1}{6} ( k_\gamma \langle
H,[C_2,[A_1,B_1]]\rangle + k_\alpha \langle H,[A_2,[B_1,C_1]]\rangle \\ 
& + k_\beta \langle H,[B_2,[C_1,A_1]]\rangle ) +  \frac{1}{12} ( k_\beta \langle H, [C_1 ,[B_2,A_1]] \rangle  \\
& + k_\gamma \langle H, [A_1 ,[C_2,B_1]] \rangle + k_\alpha \langle H, [B_1
,[A_2,C_1]] \rangle ) \\
&  -  \frac{1}{12} ( k_\alpha \langle H, [C_1 ,[A_2,B_1]] \rangle +
k_\beta \langle H, [A_1 ,[B_2,C_1]] \rangle \\ 
& + k_\gamma \langle H, [B_1
,[C_2,A_1]] \rangle ).
\end{align*}
Reorganizing the terms and using the Jacobi identity, we have that 
\begin{align}  \label{nijenhuis}
\Nij(A,B,C)& =  \frac{1}{12}( k_\gamma \langle H, [C_2 ,[A_1,B_1]]
\rangle + k_\alpha \langle H, [A_2 ,[B_1,C_1]] \rangle\nonumber\\
& + k_\beta \langle H,
[B_2 ,[C_1,A_1]] \rangle ).
\end{align}

\begin{remark}
Note that if $A_1 = A_2$, $B_1=B_2$ and $C_1=C_2$, by the Jacobi identity we
have $\Nij (A,B,C)=0$ whenever $k_\alpha = k_\beta = k_\gamma$.
\end{remark}

Moreover, by the expression (\ref{nijenhuis}) we have some immediate results.

\begin{cor}
\label{corol1} Let $A,B,C\in \mathfrak{u}$. Then 
\begin{equation*}
\Nij (A,B,C)=0.
\end{equation*}
\end{cor}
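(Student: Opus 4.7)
The plan is to deduce this immediately from formula (\ref{nijenhuis}). Elements of $\mathfrak{u}$, viewed inside $\mathfrak{u} \oplus \mathfrak{u}^{\ast}$, have vanishing cotangent component, so in the decomposition $A = A_{1} + A_{2}^{\ast}$, $B = B_{1} + B_{2}^{\ast}$, $C = C_{1} + C_{2}^{\ast}$ used in the derivation of (\ref{nijenhuis}) one has $A_{2}^{\ast} = B_{2}^{\ast} = C_{2}^{\ast} = 0$. Each of the three summands on the right-hand side of (\ref{nijenhuis}) contains exactly one of $A_{2}$, $B_{2}$, $C_{2}$ sitting in the innermost Lie bracket, so all three summands drop out and $\Nij(A, B, C) = 0$.

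The only mild point to address is that formula (\ref{nijenhuis}) was set up with each vector part belonging to a single root space $\mathfrak{u}_{\alpha}$, $\mathfrak{u}_{\beta}$, $\mathfrak{u}_{\gamma}$; for general $A, B, C \in \mathfrak{u}$ one first expands each of them in the decomposition $\mathfrak{u} = i\mathfrak{h}_{\mathbb{R}} \oplus \sum_{\alpha > 0} \mathfrak{u}_{\alpha}$ and extends by $\mathbb{R}$-trilinearity of $\Nij$, so the conclusion follows componentwise with no real obstacle. Conceptually the same fact is visible from the Courant bracket itself: the bilinear form $\langle X + \xi, Y + \eta\rangle = \tfrac{1}{2}(\xi(Y) + \eta(X))$ pairs only vectors with covectors, and the Courant bracket of two pure vectors is again a pure vector, so each of the three inner products in the definition of $\Nij(A, B, C)$ is a pairing of two pure vector parts and vanishes identically.
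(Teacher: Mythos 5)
Your proposal is correct and follows essentially the same route as the paper, which likewise treats this corollary as an immediate consequence of formula (\ref{nijenhuis}): with $A_2=B_2=C_2=0$ every summand vanishes, and the trilinearity remark handles the reduction to single root spaces. Your closing observation that the Courant bracket of pure vector fields is again a vector field, so each pairing in $\Nij$ involves two vector parts and hence vanishes, is a valid (and more general) shortcut, but it is supplementary rather than a different proof of record.
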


\begin{cor}
\label{corol2} Let $A,B \in \mathfrak{u}^\ast$. Then 
\begin{equation*}
\Nij (A,B,C)=0.
\end{equation*}
for all $C\in \mathfrak{u}\oplus \mathfrak{u}^\ast$.
\end{cor}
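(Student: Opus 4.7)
The plan is to apply the compact Nijenhuis formula (\ref{nijenhuis}) directly, exploiting the fact that every term in it involves at least one bracket of the form $[\cdot_1, \cdot_1]$ taken between the $\mathfrak{u}$-components of two of the three inputs. Writing $A = A_1 + A_2^\ast$, $B = B_1 + B_2^\ast$, $C = C_1 + C_2^\ast$ with $A_1, B_1, C_1 \in \mathfrak{u}$ and $A_2^\ast, B_2^\ast, C_2^\ast \in \mathfrak{u}^\ast$, the hypothesis $A, B \in \mathfrak{u}^\ast$ forces $A_1 = B_1 = 0$.

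Inspecting the three summands in (\ref{nijenhuis}), each one contains a factor of the form $[X_1, Y_1]$ with $X, Y$ being a distinct pair from $\{A, B, C\}$: the first term has $[A_1, B_1]$, the second has $[B_1, C_1]$, and the third has $[C_1, A_1]$. Since in every pair at least one of $A_1, B_1$ appears, and both are zero, each of these inner brackets vanishes. Consequently every term in the expression for $\Nij(A, B, C)$ is zero, regardless of the value of $C_1$ (and so regardless of whether $C \in \mathfrak{u}$, $C \in \mathfrak{u}^\ast$, or a mixture).

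There is no real obstacle here; the entire content is that the Nijenhuis formula, after the simplifications leading to (\ref{nijenhuis}), depends on the $\mathfrak{u}$-parts of two inputs simultaneously in each summand, so having two purely cotangent inputs kills everything. The proof is therefore a one-line verification once (\ref{nijenhuis}) is in hand.
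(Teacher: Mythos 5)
Your argument is correct and is exactly the paper's reasoning: the paper derives Corollary \ref{corol2} as an immediate consequence of the expression (\ref{nijenhuis}), in which every summand contains a bracket $[A_1,B_1]$, $[B_1,C_1]$ or $[C_1,A_1]$ of the $\mathfrak{u}$-components of two inputs, so $A_1=B_1=0$ kills all three terms. Nothing further is needed.
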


\subsection{Integrability}

We have already described all invariant generalized almost complex
structures in dimension $4$. In this way, for each $\alpha \in \Pi $,
consider $\mathcal{B}=\{A_{\alpha },S_{\alpha },-S_{\alpha }^{\ast
},A_{\alpha }^{\ast }\}$ basis of $\mathfrak{u}_{\alpha }\oplus \mathfrak{u}%
_{\alpha }^{\ast }$. Note that, with this basis the matrix of the bilinear
form $\langle \cdot ,\cdot \rangle $ is $Q=\left( 
\begin{array}{cc}
0 & 1 \\ 
1 & 0%
\end{array}%
\right) $. Then the only invariant generalized almost complex structures on $%
\mathfrak{u}_{\alpha }\oplus \mathfrak{u}_{\alpha }^{\ast }$ are 
\begin{equation*}
J=\pm J_{0}=\pm \left( 
\begin{array}{cccc}
0 & -1 & 0 & 0 \\ 
1 & 0 & 0 & 0 \\ 
0 & 0 & 0 & -1 \\ 
0 & 0 & 1 & 0%
\end{array}%
\right) \ \mathnormal{\ }\mathrm{or}\ J=\left( 
\begin{array}{cccc}
a_{\alpha } & 0 & 0 & -x_{\alpha } \\ 
0 & a_{\alpha } & x_{\alpha } & 0 \\ 
0 & -y_{\alpha } & -a_{\alpha } & 0 \\ 
y_{\alpha } & 0 & 0 & -a_{\alpha }%
\end{array}%
\right)
\end{equation*}%
where $x_{\alpha }y_{\alpha }\not=0$ and $a_{\alpha }^{2}-x_{\alpha
}y_{\alpha }=-1$, with $a_{\alpha },x_{\alpha },y_{\alpha }\in \mathbb{R}$.

Thus, we fix the following notation:

\begin{itemize}
\item[a)] $J = \pm J_0 = \pm \left( 
\begin{array}{cccc}
0 & -1 & 0 & 0 \\ 
1 & 0 & 0 & 0 \\ 
0 & 0 & 0 & -1 \\ 
0 & 0 & 1 & 0%
\end{array}
\right).$ In this case, we will say that $J$ is of complex type.

\item[b)] $J = \left( 
\begin{array}{cccc}
a_\alpha & 0 & 0 & -x_\alpha \\ 
0 & a_\alpha & x_\alpha & 0 \\ 
0 & -y_\alpha & -a_\alpha & 0 \\ 
y_\alpha & 0 & 0 & -a_\alpha%
\end{array}
\right),$ where $x_\alpha y_\alpha \not=0$ and $a_\alpha ^2-x_\alpha
y_\alpha = -1$, with $a_\alpha ,x_\alpha ,y_\alpha \in \mathbb{R}$. In this
case, we will say that $J$ is of non-complex type.
\end{itemize}

\begin{remark}
The nomenclature used above is due to the fact that if $J$ is of complex
type, then $J$ is a generalized almost complex structure coming from an
almost complex structure, that is, 
\begin{equation*}
J = \left( 
\begin{array}{cc}
-\widehat{J}_0 & 0 \\ 
0 & \widehat{J}_0 ^\ast%
\end{array}%
\right)
\end{equation*}
where $\widehat{J}_0$ is an almost complex structure. To know, the almost
complex structure is $\widehat{J}_0= \pm \left( 
\begin{array}{cc}
0 & 1 \\ 
-1 & 0%
\end{array}
\right)$.
\end{remark}

In this moment we are interested to study when they are integrable, that is,
when a generalized almost complex structure is, in fact, a generalized
complex structure. To do this, given $\mathcal{J}$ a generalized almost
complex structure, we are going to analyze the Nijenhuis operator restricted
to the $i$-eigenspace of $\mathcal{J}$.

With some simple calculations we can see the following facts.

\begin{itemize}
\item[a)] If $J$ is of complex type its $i$-eigenspace is 
\begin{equation*}
L=\mathrm{span}_{\mathbb{C}}\{A_{\alpha }-iS_{\alpha },A_{\alpha }^{\ast
}-iS_{\alpha }^{\ast }\}
\end{equation*}%
or 
\begin{equation*}
L=\mathrm{span}_{\mathbb{C}}\{A_{\alpha }+iS_{\alpha },A_{\alpha }^{\ast
}+iS_{\alpha }^{\ast }\}
\end{equation*}%
depending on whether we have $J=J_{0}$ or $J=-J_{0}$, respectively.

\item[b)] If $J$ is of non-complex type its $i$-eigenspace is 
\begin{equation*}
L=\mathrm{span}_{\mathbb{C}}\{x_{\alpha }A_{\alpha }+(a_{\alpha
}-i)A_{\alpha }^{\ast },x_{\alpha }S_{\alpha }+(a_{\alpha }-i)S_{\alpha
}^{\ast }\}.
\end{equation*}
\end{itemize}

As we will do calculations involving elements $A_\alpha$ and $S_\alpha$,
with $\alpha$ being a root, worth remembering the following:

\begin{lem}
The Lie bracket between the basic elements of $\mathfrak{u}$ are given by: 
\begin{multicols}{2}\noindent
$[iH_\alpha, A_\beta] = \beta(H_\alpha)S_\beta$ \\
$[iH_\alpha, S_\beta] = - \beta(H_\alpha)A_\beta$\\
$[A_\alpha,S_\alpha]=2iH_\alpha$\\
$[A_\alpha ,A_\beta]=m_{\alpha ,\beta}A_{\alpha +\beta}+m_{-\alpha ,\beta}A_{\alpha -\beta}$\\
$[S_\alpha ,S_\beta]=-m_{\alpha ,\beta}A_{\alpha +\beta}-m_{\alpha ,-\beta}A_{\alpha -\beta}$\\
$[A_\alpha ,S_\beta]=m_{\alpha ,\beta}S_{\alpha +\beta}+m_{\alpha ,-\beta}S_{\alpha -\beta}$.
\end{multicols}
\end{lem}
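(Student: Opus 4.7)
The plan is to reduce every bracket to the Weyl basis $\{H_\alpha, X_\gamma\}$ via the defining substitutions $A_\gamma = X_\gamma - X_{-\gamma}$ and $S_\gamma = i(X_\gamma + X_{-\gamma})$, expand bilinearly, and then reassemble the resulting $X_{\pm\gamma}$ combinations back into $A$'s and $S$'s. The only ingredients are $[H, X_\gamma] = \gamma(H) X_\gamma$, the normalisation $[X_\gamma, X_{-\gamma}] = H_\gamma$ (which comes from $\langle X_\gamma, X_{-\gamma} \rangle = 1$), the structure relation $[X_\alpha, X_\beta] = m_{\alpha,\beta} X_{\alpha+\beta}$ with $m_{\alpha,\beta} = 0$ when $\alpha+\beta \notin \Pi$, and the antisymmetry $m_{-\alpha,-\beta} = -m_{\alpha,\beta}$ of the Weyl basis.

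For the two torus relations, one just computes $[iH_\alpha, X_\beta - X_{-\beta}] = i\beta(H_\alpha) X_\beta + i\beta(H_\alpha) X_{-\beta} = \beta(H_\alpha) S_\beta$ and similarly for the $S_\beta$ case; these are one-line checks. For $[A_\alpha, S_\alpha]$ the cross terms $[X_\alpha, X_{-\alpha}]$ and $[X_{-\alpha}, X_\alpha] = -H_\alpha$ combine to give $2iH_\alpha$ after the factor of $i$ is pulled out.

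The main (very mild) obstacle is the bookkeeping in the last three identities, where one has to combine four Weyl-basis brackets and recognise the result as $A$'s and $S$'s. For instance, expanding
\begin{equation*}
[A_\alpha, A_\beta] = [X_\alpha, X_\beta] - [X_\alpha, X_{-\beta}] - [X_{-\alpha}, X_\beta] + [X_{-\alpha}, X_{-\beta}]
\end{equation*}
yields $m_{\alpha,\beta} X_{\alpha+\beta} + m_{-\alpha,-\beta} X_{-(\alpha+\beta)} - m_{\alpha,-\beta} X_{\alpha-\beta} - m_{-\alpha,\beta} X_{-(\alpha-\beta)}$. Applying $m_{-\alpha,-\beta} = -m_{\alpha,\beta}$ turns the first pair into $m_{\alpha,\beta} A_{\alpha+\beta}$; applying the same antisymmetry (with $\alpha \leftrightarrow -\alpha$) gives $m_{\alpha,-\beta} = -m_{-\alpha,\beta}$, which turns the second pair into $m_{-\alpha,\beta} A_{\alpha-\beta}$. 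The computations for $[S_\alpha, S_\beta]$ and $[A_\alpha, S_\beta]$ are identical in structure, with the extra factors of $i$ in the definition of $S$ accounting for the sign pattern $-m_{\alpha,\beta} A_{\alpha+\beta} - m_{\alpha,-\beta} A_{\alpha-\beta}$ in the first case and for the appearance of $S_{\alpha\pm\beta}$ rather than $A_{\alpha\pm\beta}$ in the second.

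Since $\mathfrak{u}$ is a real subalgebra of $\mathfrak{g}$, the right-hand sides automatically lie in $\mathfrak{u}$, so no further reality check is needed; the lemma follows by direct verification of each identity.
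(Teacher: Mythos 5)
Your proposal is correct: the paper states this lemma without proof (it is the standard Weyl-basis computation), and your direct expansion of $A_\gamma=X_\gamma-X_{-\gamma}$, $S_\gamma=i(X_\gamma+X_{-\gamma})$ using $[X_\gamma,X_{-\gamma}]=H_\gamma$, $[X_\alpha,X_\beta]=m_{\alpha,\beta}X_{\alpha+\beta}$ and $m_{-\alpha,-\beta}=-m_{\alpha,\beta}$ (hence $m_{\alpha,-\beta}=-m_{-\alpha,\beta}$) is exactly the intended argument, with all signs checking out. The only implicit convention, shared with the statement itself, is that the last three formulas are read for $\alpha\neq\pm\beta$, the case $\beta=\alpha$ being covered by $[A_\alpha,S_\alpha]=2iH_\alpha$.
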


Now, in search of integrable structures we will consider the restriction of
the Nijenhuis operator to the subspace spanned by $\{A_{\alpha },S_{\alpha
},A_{\alpha }^{\ast },S_{\alpha }^{\ast }\}$.

\begin{pps}
Let $\alpha $ be a root. The Nijenhuis operator restricted to $\mathfrak{u}%
_{\alpha }\oplus \mathfrak{u}_{\alpha }^{\ast }$ is identically zero, where $%
\mathfrak{u}_{\alpha }=\mathrm{span}_{\mathbb{C}}\{A_{\alpha },S_{\alpha }\}$
and $\mathfrak{u}_{\alpha }^{\ast }=\mathrm{span}_{\mathbb{C}}\{A_{\alpha
}^{\ast },S_{\alpha }^{\ast }\}$.
\end{pps}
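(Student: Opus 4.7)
The plan is to combine the two earlier corollaries with the explicit Nijenhuis formula (\ref{nijenhuis}), together with two structural facts about root spaces: that $[\mathfrak{u}_\alpha,\mathfrak{u}_\alpha]$ lies in the Cartan subalgebra, and that the Cartan--Killing form vanishes between $\mathfrak{h}$ and any root space.

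First I would reduce the number of cases. By trilinearity it suffices to check $\Nij(A,B,C)$ when each input is either purely tangent (in $\mathfrak{u}_\alpha$) or purely cotangent (in $\mathfrak{u}_\alpha^*$). Corollary \ref{corol1} kills the all-tangent case, and Corollary \ref{corol2} kills any case with at least two cotangent inputs. So the only remaining situation is two inputs in $\mathfrak{u}_\alpha$ and one input in $\mathfrak{u}_\alpha^*$; up to permutation, say $A_1,B_1\in\mathfrak{u}_\alpha$ with $A_2=B_2=0$, and $C=C_2^*\in\mathfrak{u}_\alpha^*$ with $C_1=0$.

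Next I would apply formula (\ref{nijenhuis}). Since all three root labels collapse to $\alpha$, we have $k_\alpha=k_\beta=k_\gamma$, and in the case above only the first term survives, giving
\begin{equation*}
\Nij(A,B,C)=\tfrac{1}{12}k_\alpha\langle H,[C_2,[A_1,B_1]]\rangle,
\end{equation*}
with $A_1,B_1,C_2\in\mathfrak{u}_\alpha$ (the identification of $\mathfrak{u}_\alpha^*$ with $\mathfrak{u}_\alpha$ via the KKS form turns the cotangent input into an element of $\mathfrak{u}_\alpha$). The other sub-cases of ``two tangent, one cotangent'' are perfectly analogous, producing terms of exactly the same shape.

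The key step is to observe that $[\mathfrak{u}_\alpha,\mathfrak{u}_\alpha]\subset\mathfrak{h}_{\mathbb{C}}$: indeed the only nontrivial bracket among $\{A_\alpha,S_\alpha\}$ is $[A_\alpha,S_\alpha]=2iH_\alpha$, so $[A_1,B_1]\in\mathbb{C}\cdot H_\alpha$. Consequently $[C_2,[A_1,B_1]]\in[\mathfrak{u}_\alpha,\mathfrak{h}]\subset\mathfrak{u}_\alpha$, which is a sum of root spaces $\mathfrak{g}_{\pm\alpha}$. Since $H\in\mathfrak{t}\subset\mathfrak{h}$ and the Cartan--Killing form pairs $\mathfrak{h}$ nontrivially only with itself, we conclude $\langle H,[C_2,[A_1,B_1]]\rangle=0$. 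Each of the three terms in (\ref{nijenhuis}) vanishes for the same reason, so $\Nij$ is identically zero on $\mathfrak{u}_\alpha\oplus\mathfrak{u}_\alpha^*$. There is no serious obstacle here; the only thing to be careful about is tracking that the KKS identification really does land $C_2$ inside $\mathfrak{u}_\alpha$ so that the orthogonality argument applies cleanly.
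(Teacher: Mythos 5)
Your proposal is correct and follows essentially the same route as the paper: reduce via Corollaries \ref{corol1} and \ref{corol2} to the ``two tangent, one cotangent'' case, apply formula (\ref{nijenhuis}) so that only the term $\tfrac{1}{12}k_\alpha\langle H,[C_2,[A_1,B_1]]\rangle$ survives, and conclude from $[A_\alpha,S_\alpha]=2iH_\alpha$ that the inner bracket lands in $\mathfrak{u}_\alpha\subset\mathfrak{g}_\alpha\oplus\mathfrak{g}_{-\alpha}$, which is Cartan--Killing orthogonal to $H$. The paper just writes out the two explicit cases $\Nij(A_\alpha,S_\alpha,A_\alpha^\ast)$ and $\Nij(A_\alpha,S_\alpha,S_\alpha^\ast)$, while you phrase the same computation structurally; there is no substantive difference.
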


\begin{proof}
By corollary \ref{corol2}, we just need to analyse two cases:
\begin{enumerate}
\item $\Nij(A_\alpha,S_\alpha,A^\ast _\alpha)$
\begin{eqnarray*}
\Nij(A_\alpha,S_\alpha,A^\ast _\alpha) & = & \frac{1}{12} k_\alpha \left( \langle H,[A_\alpha,[A_\alpha,S_\alpha]]\rangle \right)\\
& = & \frac{1}{12} k_\alpha \langle H,[A_\alpha, 2iH_\alpha]\rangle \\
& = & -\frac{i}{6} k_\alpha \langle H,\alpha(H_\alpha)S_\alpha\rangle\\
& = & 0, \textnormal{ because $H$ is orthogonal to $S_\alpha$.}
\end{eqnarray*}

\item $\Nij(A_\alpha,S_\alpha,S^\ast _\alpha)$
\begin{eqnarray*}
\Nij(A_\alpha,S_\alpha,S^\ast _\alpha) & = & \frac{1}{12} k_\alpha \left( \langle H,[S_\alpha,[A_\alpha,S_\alpha]]\rangle \right)\\
& = & \frac{1}{12} k_\alpha \langle H,[A_\alpha, 2iH_\alpha]\rangle \\
& = & -\frac{i}{6} k_\alpha \langle H,\alpha(H_\alpha)A_\alpha\rangle\\
& = & 0, \textnormal{ because $H$ is orthogonal to $A_\alpha$.}
\end{eqnarray*}
\end{enumerate}
Proving that $\Nij|_{\mathfrak{u}_\alpha \oplus \mathfrak{u}_\alpha ^\ast}=0$.
\end{proof}

This way, we are going to restrict the Nijenhuis operator to the subspace
spanned by $A_\alpha$, $S_\alpha$, $A^\ast _\alpha$, $S^\ast _\alpha$, $%
A_\beta$, $S_\beta$, $A^\ast _\beta$ e $S^\ast _\beta$, where $\alpha,\beta
\in \Pi$. Due to the propositions \ref{corol1} and \ref{corol2}, we just have 10
cases to check. Doing the calculations exactly equal we have done in the
last proposition, we have that: 
\begin{equation*}
\Nij(A_\alpha,S_\alpha,A^\ast _\beta) = \Nij(A_\alpha,S_\alpha,S^\ast
_\beta) = \Nij(A_\alpha,A^\ast _\alpha,A _\beta) = \Nij(A_\alpha,A^\ast
_\alpha,S _\beta)
\end{equation*}
\begin{equation*}
= \Nij(A_\alpha,S^\ast _\alpha,A _\beta) = \Nij(A_\alpha,S^\ast _\alpha,S
_\beta) = \Nij(S_\alpha,A^\ast _\alpha,A _\beta) = \Nij(S_\alpha,A^\ast
_\alpha,S _\beta)
\end{equation*}
\begin{equation*}
= \Nij(S_\alpha,S^\ast _\alpha,A _\beta) = \Nij(S_\alpha,S^\ast _\alpha,S
_\beta)=0.
\end{equation*}
From these calculations we can conclude that the Nijenhuis operator
restricted to the subspace spanned by two roots is zero.

\begin{pps}
Let $\alpha$ and $\beta$ be roots. The Nijenhuis operator restricted to $%
\mathfrak{u}_\alpha \oplus \mathfrak{u}_\beta \oplus \mathfrak{u}_\alpha
^\ast \oplus \mathfrak{u}_\beta ^\ast$ is zero.
\end{pps}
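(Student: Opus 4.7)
The plan is to combine Corollary \ref{corol1} and Corollary \ref{corol2} with a case analysis on triples containing exactly one starred argument, mirroring the one-root proposition. By Corollary \ref{corol1}, triples entirely inside $\mathfrak{u}$ contribute zero, and by Corollary \ref{corol2}, triples with two or more entries in $\mathfrak{u}^\ast$ contribute zero. Removing also the triples that stay inside a single block $\mathfrak{u}_\alpha \oplus \mathfrak{u}_\alpha^\ast$ or $\mathfrak{u}_\beta \oplus \mathfrak{u}_\beta^\ast$ (handled by the previous proposition), and exploiting the total antisymmetry of $\Nij$ together with the symmetry $\alpha \leftrightarrow \beta$, one reduces to precisely the ten representative triples enumerated above.

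For each such triple, formula (\ref{nijenhuis}) collapses to a single potentially nonzero term of the form $\tfrac{1}{12}\, k_\delta \,\langle H,\, [X,[Y,Z]]\rangle$, where $X$ is the unique vector obtained from the starred part (via the identification $\mathfrak{u}_\delta \approx \mathfrak{u}_\delta^\ast$ with $\delta \in \{\alpha, \beta\}$), and $Y, Z$ are root vectors among $A_\alpha, S_\alpha, A_\beta, S_\beta$; the other two terms vanish because at least one of $A_2, B_2, C_2$ is zero in them, or because an inner bracket vanishes. The task is then to show that $[X,[Y,Z]]$ lies in $\bigoplus_\gamma \mathfrak{u}_\gamma$, which by orthogonality of $i\mathfrak{h}_{\mathbb{R}}$ and the root spaces under the Cartan--Killing form forces $\langle H, [X,[Y,Z]]\rangle = 0$.

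The computation splits into two patterns. In the two cases where $Y, Z \in \mathfrak{u}_\alpha$ one has $[Y,Z] = \pm 2iH_\alpha \in \mathfrak{h}$, so $[X, \pm 2iH_\alpha]$ is a real multiple of $A_\beta$ or $S_\beta$ via the identities $[iH_\alpha, A_\beta] = \beta(H_\alpha) S_\beta$ and $[iH_\alpha, S_\beta] = -\beta(H_\alpha) A_\beta$ of the preceding lemma. In the remaining eight cases the roots of $Y$ and $Z$ are distinct, so $[Y,Z]$ is a combination of $A_{\alpha\pm\beta}$ or $S_{\alpha\pm\beta}$ by the lemma, and bracketing again with $X \in \mathfrak{u}_\alpha$ or $\mathfrak{u}_\beta$ produces only root vectors in $\mathfrak{u}_{2\alpha \pm \beta}$, $\mathfrak{u}_{\alpha \pm 2\beta}$, $\mathfrak{u}_\alpha$, or $\mathfrak{u}_\beta$.

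The main obstacle is verifying that no iterated bracket accidentally lands in the Cartan subalgebra; a Cartan component could appear only if $[Y,Z]$ produced a vector in $\mathfrak{g}_{-\gamma}$ to be paired with $X \in \mathfrak{g}_\gamma$. Inspection of the root labels arising across the ten cases shows this would require $2\alpha$ or $2\beta$ to be a root of $\mathfrak{g}$, which is impossible in a semi-simple root system. Once this is ruled out, each of the ten values equals $\tfrac{1}{12} k_\delta \langle H, R\rangle$ with $R$ a sum of root vectors, hence vanishes, establishing the proposition.
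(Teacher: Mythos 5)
Your proposal is correct and takes essentially the same route as the paper: reduce via Corollaries \ref{corol1} and \ref{corol2} (together with the one-root proposition) to the ten mixed triples containing exactly one starred entry, note that formula (\ref{nijenhuis}) leaves a single term $\tfrac{1}{12}k_\delta\langle H,[X,[Y,Z]]\rangle$, and conclude it vanishes since $H$ pairs trivially with root vectors. Your uniform justification that no Cartan component can occur unless $2\alpha$ or $2\beta$ were a root simply packages the paper's case-by-case bracket computations into one observation, which is the same reasoning the paper itself invokes for triples of roots with $\alpha+\beta+\gamma\neq 0$.
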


It remains to see the case in which we have three roots $\alpha $, $\beta $
and $\gamma $. We can split this in two cases: $\alpha +\beta +\gamma =0$
and $\alpha +\beta +\gamma \not=0$. But, with similar calculations, we have
that if $\alpha +\beta +\gamma \not=0$ then $\Nij$ is zero, because the only
case in which $\langle H,X\rangle \not=0$ is that $X\in i\mathfrak{h}_{%
\mathbb{R}}$ and this happens just when we have the bracket between $%
A_{\lambda }$ and $S_{\lambda }$, for some root $\lambda $.

Therefore we need to do the calculations with three roots $\alpha $, $\beta $
and $\gamma $ satisfying $\alpha +\beta +\gamma =0$. Let $\alpha ,\beta $ be
roots such that $\alpha +\beta $ is a root as well. After some calculations,
we have 
\begin{eqnarray*}
\Nij(A_{\alpha },S_{\beta },A_{\alpha +\beta }^{\ast }) &=&-\Nij(A_{\alpha
},A_{\beta },S_{\alpha +\beta }^{\ast })=\Nij(S_{\alpha },S_{\beta
},S_{\alpha +\beta }^{\ast }) \\
&=&\Nij(S_{\alpha },A_{\beta },A_{\alpha +\beta }^{\ast })=\frac{i}{6}%
m_{\alpha ,\beta }
\end{eqnarray*}

\begin{eqnarray*}
\Nij (A_\alpha, S^\ast _\beta, A_{\alpha+\beta}) & = & -\Nij (A_\alpha,
A^\ast _\beta, S_{\alpha+\beta}) = \Nij (S_\alpha, S^\ast _\beta, S
_{\alpha+\beta}) \\
& = & \Nij (S_\alpha, A^\ast _\beta, A_{\alpha+\beta}) = -\frac{i}{6}%
m_{-(\alpha+\beta),\alpha}
\end{eqnarray*}

\begin{eqnarray*}
\Nij(A_{\alpha }^{\ast },S_{\beta },A_{\alpha +\beta }) &=&-\Nij(A_{\alpha
}^{\ast },A_{\beta },S_{\alpha +\beta })=\Nij(S_{\alpha }^{\ast },S_{\beta
},S_{\alpha +\beta }) \\
&=&\Nij(S_{\alpha }^{\ast },A_{\beta },A_{\alpha +\beta })=-\frac{i}{6}%
m_{\beta ,-(\alpha +\beta )}
\end{eqnarray*}%
and the other cases are all zero. For these calculations we recall that $%
m_{\alpha ,\beta }=m_{\beta ,\gamma }=m_{\gamma ,\alpha }$ when $\alpha
+\beta +\gamma =0$.

Now we will consider an invariant generalized almost complex structure $\mathcal{J}$,
and we know that for each root $\alpha $ the restriction of $\mathcal{J}$ to 
$\mathfrak{u}_{\alpha }\oplus \mathfrak{u}_{\alpha }^{\ast }$ is of complex
or non-complex type, this implies that the $i$-eigenspace of $\mathcal{J}$
restrict to this subspace is $L_{\alpha }=\mathrm{span}_{\mathbb{C}%
}\{A_{\alpha }\pm iS_{\alpha },A_{\alpha }^{\ast }\pm iS_{\alpha }^{\ast }\}$
or $L_{\alpha }=\mathrm{span}_{\mathbb{C}}\{x_{\alpha }A_{\alpha
}+(a_{\alpha }-i)A_{\alpha }^{\ast },x_{\alpha }S_{\alpha }+(a_{\alpha
}-i)S_{\alpha }^{\ast }\}$. So we need to analyze the Nijenhuis operator
restricted to the $i$-eigenspace of $\mathcal{J}$ for each triple of roots $%
\alpha $, $\beta $ and $\alpha +\beta $.

First of all let us fix some notation. Let $\mathcal{J}$ be an invariant generalized
almost complex structure and let $\alpha $ and $\beta $ be roots such that $%
\alpha +\beta $ is a root too. We will denote by $J_{\alpha }$, $J_{\beta }$
and $J_{\alpha +\beta }$ the restriction of $\mathcal{J}$ to the roots $%
\alpha $, $\beta $ and $\alpha + \beta$ respectively. And we will denote by $%
L_{\alpha }$, $L_{\beta }$ and $L_{\alpha +\beta }$ the $i$-eingenspace of $%
J_{\alpha }$, $J_{\beta }$ and $J_{\alpha +\beta }$ respectively.

\begin{pps}
Given $\mathcal{J}$ a generalized almost complex strucutre. Suppose that $%
J_{\alpha }$ and $J_{\beta }$ are of complex type both with the same sign,
that is, both equal to $J_{0}$ or $-J_{0}$, and suppose $J_{\alpha +\beta }$
of non-complex type. Then the Nijenhuis operator restricted to $L=L_{\alpha
}\cup L_{\beta }\cup L_{\alpha +\beta }$ is nonzero.
\end{pps}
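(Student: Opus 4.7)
The plan is to exhibit three explicit elements $A \in L_\alpha$, $B \in L_\beta$, $C \in L_{\alpha+\beta}$ for which $\Nij(A,B,C) \neq 0$, by combining the Nijenhuis values already tabulated in the excerpt with the trilinearity of $\Nij$.

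First, I fix notation. Since $J_\alpha$ and $J_\beta$ have the same sign, say both $+J_0$ (the opposite sign is handled identically), the $i$-eigenspaces are
$L_\alpha=\mathrm{span}_{\mathbb{C}}\{A_\alpha-iS_\alpha,\ A_\alpha^\ast-iS_\alpha^\ast\}$,
$L_\beta=\mathrm{span}_{\mathbb{C}}\{A_\beta-iS_\beta,\ A_\beta^\ast-iS_\beta^\ast\}$, and
$L_{\alpha+\beta}=\mathrm{span}_{\mathbb{C}}\{x_{\alpha+\beta}A_{\alpha+\beta}+(a_{\alpha+\beta}-i)A_{\alpha+\beta}^\ast,\ x_{\alpha+\beta}S_{\alpha+\beta}+(a_{\alpha+\beta}-i)S_{\alpha+\beta}^\ast\}$. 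I then pick the test triple
\[
A=A_\alpha-iS_\alpha,\qquad B=A_\beta-iS_\beta,\qquad C=x_{\alpha+\beta}A_{\alpha+\beta}+(a_{\alpha+\beta}-i)A_{\alpha+\beta}^\ast .
\]

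The key simplification comes from Corollary \ref{corol1}: the part of $C$ lying in $\mathfrak{u}$ (namely $x_{\alpha+\beta}A_{\alpha+\beta}$) produces an all-$\mathfrak{u}$ triple with $A,B$, and thus contributes nothing to $\Nij(A,B,C)$. By trilinearity, only the dual part of $C$ survives, giving
\[
\Nij(A,B,C)=(a_{\alpha+\beta}-i)\,\Nij\!\bigl(A_\alpha-iS_\alpha,\ A_\beta-iS_\beta,\ A_{\alpha+\beta}^\ast\bigr).
\]
Expanding this by trilinearity produces four triples; by the table of Nijenhuis values recorded just before the proposition, $\Nij(A_\alpha,A_\beta,A_{\alpha+\beta}^\ast)=0$ and $\Nij(S_\alpha,S_\beta,A_{\alpha+\beta}^\ast)=0$ (they do not appear in the list of nonvanishing cases), while $\Nij(A_\alpha,S_\beta,A_{\alpha+\beta}^\ast)=\Nij(S_\alpha,A_\beta,A_{\alpha+\beta}^\ast)=\tfrac{i}{6}m_{\alpha,\beta}$.

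Collecting coefficients, the cross terms both carry a factor of $-i$, so they add rather than cancel, and I obtain
\[
\Nij(A,B,C)=(a_{\alpha+\beta}-i)\cdot(-2i)\cdot\tfrac{i}{6}m_{\alpha,\beta}=\tfrac{1}{3}(a_{\alpha+\beta}-i)\,m_{\alpha,\beta}.
\]
The hypothesis that $\alpha+\beta$ is a root gives $m_{\alpha,\beta}\neq 0$, and $a_{\alpha+\beta}-i\neq 0$ since $a_{\alpha+\beta}\in\mathbb{R}$; hence $\Nij(A,B,C)\neq 0$. The opposite sign case ($J_\alpha=J_\beta=-J_0$) proceeds identically, replacing $-i$ by $+i$ in $A$ and $B$ and yielding the same nonzero conclusion up to sign. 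There is no real obstacle here; the only point that needs care is bookkeeping the trilinear expansion so as to correctly identify which of the eight formal triples vanish and verifying that the two surviving ones reinforce rather than cancel.
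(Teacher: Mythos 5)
Your argument is correct and is essentially the paper's own: a direct evaluation of $\Nij$ on one explicit triple drawn from $L_\alpha$, $L_\beta$, $L_{\alpha+\beta}$, using the previously tabulated basic values. The only (immaterial) difference is the choice of witness — you take the vector generators $A_\alpha-iS_\alpha$, $A_\beta-iS_\beta$ and get $\tfrac{1}{3}(a_{\alpha+\beta}-i)m_{\alpha,\beta}\neq 0$, while the paper pairs $A_\alpha-iS_\alpha$ with the dual generator $A^\ast_\beta-iS^\ast_\beta$ and gets $-\tfrac{1}{3}x_{\alpha+\beta}\,m_{-(\alpha+\beta),\alpha}\neq 0$; both computations are consistent with the stated table and with each other.
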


\begin{proof}
It follows from a direct calculation:
\begin{itemize}
\item[a)] If $J_\alpha$ and $J_\beta$ are equal $J_0$:
\begin{eqnarray*}
\Nij (A_\alpha - i S_\alpha,A^\ast _\beta -iS^\ast _\beta, x_{\alpha+\beta} A_{\alpha+\beta}+(a_{\alpha+\beta}-i)A^\ast _{\alpha+\beta}) \\ =-\frac{1}{3}x_{\alpha+\beta}m_{-(\alpha+\beta),\alpha}.
\end{eqnarray*}

\item[b)] If $J_\alpha$ and $J_\beta$ are equal $-J_0$:
\begin{eqnarray*}
\Nij (A_\alpha + i S_\alpha,A^\ast _\beta +iS^\ast _\beta, x_{\alpha+\beta} A_{\alpha+\beta}+(a_{\alpha+\beta}-i)A^\ast _{\alpha+\beta}) \\ =\frac{1}{3}x_{\alpha+\beta}m_{-(\alpha+\beta),\alpha}.
\end{eqnarray*}
\end{itemize}
\end{proof}

We can prove a similar result as follows.

\begin{pps}
Given $\mathcal{J}$ a generalized almost complex strucutre. Suppose that two
of $J_\alpha$, $J_\beta$ and $J_{\alpha+\beta}$ are of non-complex type and
the other one of complex type, then the Nijenhuis operator restricted to $L
= L_\alpha \cup L_\beta \cup L_{\alpha+\beta}$ is nonzero.
\end{pps}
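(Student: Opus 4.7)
The plan is to mirror the strategy used in the previous proposition. For each of the three sub-cases (specified by which of $J_\alpha,J_\beta,J_{\alpha+\beta}$ is the one of complex type), I would exhibit a single triple drawn from $L_\alpha \cup L_\beta \cup L_{\alpha+\beta}$ on which $\Nij$ evaluates to a nonzero real number, relying on the explicit table of nonzero Nijenhuis values on basis triples coming from three roots $\alpha,\beta,\alpha+\beta$ that was just established.

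The key trick is the one used previously: pick the purely dual generator from the unique complex-type factor. Thus if $J_\alpha$ is of complex type, say $J_\alpha = J_0$, take
\begin{equation*}
E_1 = A^\ast_\alpha - i S^\ast_\alpha \in L_\alpha,
\end{equation*}
together with the non-complex generators
\begin{equation*}
E_2 = x_\beta A_\beta + (a_\beta - i)A^\ast_\beta \in L_\beta, \quad E_3 = x_{\alpha+\beta} A_{\alpha+\beta} + (a_{\alpha+\beta}-i)A^\ast_{\alpha+\beta} \in L_{\alpha+\beta}.
\end{equation*}
Expanding $\Nij(E_1,E_2,E_3)$ by multilinearity, every term in which the starred summand of $E_2$ or $E_3$ is selected produces a triple with at least two entries in $\mathfrak{u}^\ast$, and such triples vanish by Corollary \ref{corol2}. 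This leaves only
\begin{equation*}
x_\beta x_{\alpha+\beta}\,\Nij\bigl(A^\ast_\alpha - iS^\ast_\alpha,\,A_\beta,\,A_{\alpha+\beta}\bigr),
\end{equation*}
and of the four terms in its expansion only $-i\,\Nij(S^\ast_\alpha, A_\beta, A_{\alpha+\beta})$ appears in the table, with value $-\tfrac{i}{6}\,m_{\beta,-(\alpha+\beta)}$. The total is $-\tfrac{1}{6}\,x_\beta x_{\alpha+\beta}\,m_{\beta,-(\alpha+\beta)}$, which is nonzero since $x_\beta,x_{\alpha+\beta}\neq 0$ by the non-complex hypothesis and $m_{\beta,-(\alpha+\beta)} = m_{\alpha,\beta} \neq 0$ because $\alpha+\beta$ is a root.

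The sub-cases in which $J_\beta$ or $J_{\alpha+\beta}$ is the complex factor are handled by the same recipe under cyclic permutation of the roles: use $E_2 = A^\ast_\beta - iS^\ast_\beta$ or $E_3 = A^\ast_{\alpha+\beta} - iS^\ast_{\alpha+\beta}$ respectively, and the $A$-parts of the other two generators. The surviving term in each case is, up to sign, a nonzero multiple of $x_\alpha x_{\alpha+\beta}\,m_{-(\alpha+\beta),\alpha}$ or $x_\alpha x_\beta\,m_{\alpha,\beta}$, and these structure constants all coincide with $m_{\alpha,\beta}$ up to sign via the identity $m_{\alpha,\beta} = m_{\beta,-(\alpha+\beta)} = m_{-(\alpha+\beta),\alpha}$ available when the three roots sum to zero. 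If the complex-type factor equals $-J_0$ rather than $J_0$ the sign of $i$ in $A^\ast - iS^\ast$ flips, producing only an overall sign change. There is no essential technical obstacle beyond the bookkeeping needed to verify that the single basis-triple left standing after the two-star entries are killed by Corollary \ref{corol2} does appear in the table of nonzero Nijenhuis evaluations, and that the coefficient produced does not accidentally cancel — which is ensured by the product of an $-i$ (from the starred generator) with an $-\tfrac{i}{6}$ (from the table), yielding a purely real, manifestly nonzero factor.
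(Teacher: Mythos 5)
Your proposal is correct and follows essentially the same route as the paper: evaluate $\Nij$ on the triple formed by the purely dual generator $A^\ast \mp iS^\ast$ of the complex-type factor and the $A$-type generators of the two non-complex factors, reduce by multilinearity and Corollary \ref{corol2}, and read the surviving value $\pm\frac{1}{6}x\,x'\,m_{\alpha,\beta}$ off the table of basic Nijenhuis evaluations, which is nonzero since the $x$'s are nonzero and $m_{\alpha,\beta}\neq 0$. (The only slip is cosmetic: the reduced expression has two terms, not four, but this does not affect the computation.)
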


\begin{proof}
Again, we will do the calculations case by case:
\begin{itemize}
\item[a)] If $J_\alpha$ and $J_\beta$ are of non-complex type, then $J_{\alpha+\beta}$ is of complex type. For $J_{\alpha+\beta}$ we can have $J_0$ and $-J_0$. Then, we have
\begin{eqnarray*}
\Nij (x_\alpha A_\alpha+(a_\alpha -i)A^\ast _\alpha, x_\beta A_\beta+(a_\beta -i)A^\ast _\beta, A^\ast _{\alpha+\beta} - iS^\ast _{\alpha +\beta}) \\ =-\frac{1}{6}x_\alpha x_\beta m_{\alpha,\beta}
\end{eqnarray*}
when $J_{\alpha+\beta} = J_0$ and
\begin{eqnarray*}
\Nij (x_\alpha A_\alpha+(a_\alpha -i)A^\ast _\alpha, x_\beta A_\beta+(a_\beta -i)A^\ast _\beta, A^\ast _{\alpha+\beta}+iS^\ast _{\alpha +\beta}) \\ =\frac{1}{6}x_\alpha x_\beta m_{\alpha,\beta}
\end{eqnarray*}
when $J_{\alpha+\beta}=-J_0$.

\item[b)] If $J_\alpha$ and $J_{\alpha+\beta}$ is of non-complex type and $J_\beta$ is of complex type. Over again, we can have $J_\beta$ equals to $J_0$ or $-J_0$. This way, we have
\begin{eqnarray*}
\Nij (x_\alpha A_\alpha+(a_\alpha -i)A^\ast _\alpha,A^\ast _\beta - iS^\ast _\beta, x_{\alpha+\beta}A_{\alpha+\beta}+(a_{\alpha+\beta}-i)A^\ast _{\alpha +\beta}) \\ =-\frac{1}{6}x_\alpha x_{\alpha+\beta} m_{-(\alpha+\beta),\alpha}
\end{eqnarray*}
when $J_\beta = J_0$ and
\begin{eqnarray*}
\Nij (x_\alpha A_\alpha+(a_\alpha -i)A^\ast _\alpha,A^\ast _\beta + iS^\ast _\beta, x_{\alpha+\beta}A_{\alpha+\beta}+(a_{\alpha+\beta}-i)A^\ast _{\alpha +\beta}) \\ =\frac{1}{6}x_\alpha x_{\alpha+\beta} m_{-(\alpha+\beta),\alpha}
\end{eqnarray*}
when $J_\beta =-J_0$.
\end{itemize}
\end{proof}

\begin{remark}
Observe that the case where $J_\beta$ and $J_{\alpha+\beta}$ is of
non-complex type and $J_\alpha$ is of complex one, is exactly the same case
of the item (b), because we can see $\alpha+\beta = \beta+\alpha$.
\end{remark}

\begin{pps}
Given $\mathcal{J}$ a generalized almost complex strucutre. Suppose that $%
J_\alpha$ is of non-complex type and $J_\beta$, $J_{\alpha+\beta}$ are of
complex type with different signs, that is, if $J_\beta=J_0$ then $%
J_{\alpha+\beta} = -J_0$ or vice versa. Then the Nijenhuis operator
restricted to $L = L_\alpha \cup L_\beta \cup L_{\alpha+\beta}$ is nonzero.
\end{pps}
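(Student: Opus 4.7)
My plan is to follow exactly the same pattern as in the two preceding propositions: for each of the two sub-cases of the sign configuration, fix a convenient triple of generators, one from $L_\alpha$, $L_\beta$ and $L_{\alpha+\beta}$, and evaluate $\Nij$ by trilinearity together with the table of Nijenhuis values on the basic elements $A_\mu,S_\mu,A^\ast_\mu,S^\ast_\mu$ that was established just before the proposition. Corollaries \ref{corol1} and \ref{corol2} will kill any summand with three tangent entries or with two cotangent entries, so only a very short explicit computation should remain.

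Concretely, assume first that $J_\beta = J_0$ and $J_{\alpha+\beta} = -J_0$, so that
\[
L_\beta = \mathrm{span}_{\mathbb{C}}\{A_\beta - iS_\beta,\ A^\ast_\beta - iS^\ast_\beta\}, \quad L_{\alpha+\beta} = \mathrm{span}_{\mathbb{C}}\{A_{\alpha+\beta} + iS_{\alpha+\beta},\ A^\ast_{\alpha+\beta} + iS^\ast_{\alpha+\beta}\},
\]
while $L_\alpha$ is spanned by $x_\alpha A_\alpha + (a_\alpha - i)A^\ast_\alpha$ and $x_\alpha S_\alpha + (a_\alpha - i)S^\ast_\alpha$. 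I will test $\Nij$ on the triple
\[
\bigl(x_\alpha A_\alpha + (a_\alpha - i)A^\ast_\alpha,\ A_\beta - iS_\beta,\ A^\ast_{\alpha+\beta} + iS^\ast_{\alpha+\beta}\bigr).
\]
Expand by trilinearity. The summand carrying $(a_\alpha - i)A^\ast_\alpha$ pairs two cotangent vectors and vanishes by Corollary \ref{corol2}. The $x_\alpha$-summand has four pieces; using the previously tabulated values $\Nij(A_\alpha, S_\beta, A^\ast_{\alpha+\beta}) = \frac{i}{6} m_{\alpha,\beta}$ and $\Nij(A_\alpha, A_\beta, S^\ast_{\alpha+\beta}) = -\frac{i}{6} m_{\alpha,\beta}$ together with the vanishing of $\Nij(A_\alpha,A_\beta,A^\ast_{\alpha+\beta})$ and $\Nij(A_\alpha,S_\beta,S^\ast_{\alpha+\beta})$, the two surviving contributions both acquire a factor $-i \cdot i = 1$ (rather than cancelling as they would have with equal signs), yielding
\[
\Nij = \tfrac{1}{3}\, x_\alpha\, m_{\alpha,\beta}.
\]
This is nonzero since $x_\alpha \neq 0$ by the non-complex type assumption on $J_\alpha$, and $m_{\alpha,\beta} \neq 0$ because $\alpha + \beta$ is a root.

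The opposite sub-case $J_\beta = -J_0$, $J_{\alpha+\beta} = J_0$ is obtained by flipping all the $\pm i$ signs on the generators of $L_\beta$ and $L_{\alpha+\beta}$, which only changes the overall sign of the answer, giving $-\tfrac{1}{3}\, x_\alpha\, m_{\alpha,\beta} \neq 0$. There is no real obstacle here beyond careful bookkeeping of which of the four expanded triples are actually nonvanishing; the key structural observation is that when $J_\beta$ and $J_{\alpha+\beta}$ carry \emph{opposite} signs, the two surviving Nijenhuis values add rather than cancel, which is exactly why this configuration fails integrability while the same-sign configuration would need to be handled separately.
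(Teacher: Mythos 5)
Your proposal is correct and follows essentially the same route as the paper: the paper's own proof consists precisely of evaluating $\Nij$ on the test triples $\bigl(x_\alpha A_\alpha +(a_\alpha -i)A^\ast_\alpha,\ A_\beta \mp iS_\beta,\ A^\ast_{\alpha+\beta}\pm iS^\ast_{\alpha+\beta}\bigr)$, obtaining $\pm\frac{1}{3}x_\alpha m_{\alpha,\beta}\neq 0$, exactly as you do. Your expansion via trilinearity, the vanishing of the two-cotangent terms, and the use of the tabulated values $\Nij(A_\alpha,S_\beta,A^\ast_{\alpha+\beta})=\frac{i}{6}m_{\alpha,\beta}=-\Nij(A_\alpha,A_\beta,S^\ast_{\alpha+\beta})$ just make explicit the computation the paper leaves implicit.
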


\begin{proof}
We have two cases to analyze. First one, suppose $J_\beta=J_0$ and $J_{\alpha+\beta}=-J_0$. In this case, we have
\[
\Nij (x_\alpha A_\alpha +(a_\alpha -i)A^\ast _\alpha,A _\beta - iS_\beta, A^\ast _{\alpha+\beta}+ iS^\ast _{\alpha +\beta}) =\frac{1}{3}x_\alpha m_{\alpha,\beta}.
\]
For the second case, suppose $J_\beta = -J_0$ and $J_{\alpha+\beta}=J_0$. Then
\[
\Nij (x_\alpha A_\alpha +(a_\alpha -i)A^\ast _\alpha,A _\beta + iS_\beta, A^\ast _{\alpha+\beta}- iS^\ast _{\alpha +\beta}) =-\frac{1}{3}x_\alpha m_{\alpha,\beta}.
\] 
\end{proof}

Note that, this proposition shows that when $J_\alpha$ and $J_{\alpha+\beta}$
are of complex type with different signs and $J_\beta$ is of non-complex
type, then the Nijenhuis operator is nonzero, for the same argument used in
the last remark. \newline

Observe that, if $J_\alpha$ is of complex type with $J_\alpha = J_0$, then
we have $J_\alpha (A_\alpha) = -S_\alpha$ and $J_\alpha (S_\alpha) =
A_\alpha $. It follows that $J_\alpha (X_\alpha) = -iX_\alpha$ and $J_\alpha
= iX_{-\alpha}$, which means that $\varepsilon _\alpha = -1$. Analogously,
if $J_\alpha = -J_0$ then $\varepsilon _\alpha = 1$.

In Gualtieri \cite{Gualtieri} it is proved that given a generalized almost
complex structure $\mathcal{J}$ which comes from an almost complex structure 
$J$, then $\mathcal{J}$ is integrable if and only if $J$ is integrable.


\begin{teo}
With the above hypotheses, in the cases which
\begin{center}
\begin{tabular}{|c|c|c|}
\hline
$J_\alpha$ & $J_\beta$ & $J_{\alpha+\beta}$ \\ \hline
$J_0$ & $J_0$ & $J_0$ \\ \hline
$J_0$ & $-J_0$ & $J_0$ \\ \hline
$-J_0$ & $J_0$ & $-J_0$ \\ \hline
$-J_0$ & $-J_0$ & $-J_0$ \\ \hline
\end{tabular}
\end{center}
the Nijenhuis operator restricted to $L= L_\alpha \cup L_\beta \cup
L_{\alpha+\beta}$ is zero.
\end{teo}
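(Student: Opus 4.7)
The plan is a direct calculation of $\Nij$ on triples of generators of $L = L_\alpha + L_\beta + L_{\alpha+\beta}$, leveraging the explicit Nijenhuis values already tabulated just above the theorem.

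First, by the preceding propositions $\Nij$ vanishes identically on any triple of arguments concentrated in one or two of the three subspaces, so it suffices to check triples whose three arguments lie one in each of the three. Writing the generators of $L_\gamma$ as $A_\gamma - i\sigma_\gamma S_\gamma$ and $A^{\ast}_\gamma - i\sigma_\gamma S^{\ast}_\gamma$, where $\sigma_\gamma \in \{+1,-1\}$ encodes $J_\gamma = \sigma_\gamma J_0$, Corollaries \ref{corol1} and \ref{corol2} further restrict attention to triples with exactly one cotangent (starred) generator. This leaves three cases, which I denote $\mathcal{N}_1, \mathcal{N}_2, \mathcal{N}_3$, according as the unique cotangent entry belongs to $L_{\alpha+\beta}$, $L_\beta$, or $L_\alpha$ respectively.

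Each $\mathcal{N}_j$ expands by trilinearity into eight scalar values of the form $\Nij(X,Y,Z^{\ast})$ with $X,Y,Z \in \{A_\gamma, S_\gamma\}$ and the star in the fixed position. From the displays immediately preceding the theorem, exactly four of these eight are nonzero and equal $\pm\tfrac{i}{6}m$, where $m := m_{\alpha,\beta} = m_{\beta,-(\alpha+\beta)} = m_{-(\alpha+\beta),\alpha}$. Combining these values with the coefficients $(-i\sigma_\gamma)$ introduced by trilinearity and collecting yields
\begin{equation*}
\mathcal{N}_1 = \frac{m}{6}\bigl[(\sigma_\alpha + \sigma_\beta) - \sigma_{\alpha+\beta}(1 + \sigma_\alpha \sigma_\beta)\bigr],
\end{equation*}
and analogous expressions for $\mathcal{N}_2, \mathcal{N}_3$ with the same bracketed polynomial up to an overall sign; in particular, $\mathcal{N}_1, \mathcal{N}_2, \mathcal{N}_3$ vanish under the same algebraic condition on $(\sigma_\alpha, \sigma_\beta, \sigma_{\alpha+\beta})$.

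To conclude, I would note that the bracketed expression is zero in exactly two situations: (i) when $\sigma_\alpha = -\sigma_\beta$, in which case both $\sigma_\alpha + \sigma_\beta$ and $1 + \sigma_\alpha \sigma_\beta$ vanish individually; and (ii) when $\sigma_\alpha = \sigma_\beta = \sigma_{\alpha+\beta}$, in which case the bracket collapses to $2(\sigma_\alpha - \sigma_{\alpha+\beta}) = 0$. The four rows of the theorem's table fall into exactly these two situations: rows 1 and 4 give $\sigma_\alpha = \sigma_\beta = \sigma_{\alpha+\beta}$, while rows 2 and 3 give $\sigma_\alpha = -\sigma_\beta$. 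Therefore $\mathcal{N}_1 = \mathcal{N}_2 = \mathcal{N}_3 = 0$ in each of the four listed cases, and hence $\Nij|_L = 0$. The only real obstacle is the careful sign bookkeeping in the trilinear expansion of each $\mathcal{N}_j$; once the four nonzero primitives $\Nij(X,Y,Z^{\ast})$ from the displays preceding the theorem are inserted with their coefficients $(-i\sigma_\gamma)$, the algebraic cancellations are immediate.
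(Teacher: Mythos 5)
Your proposal is correct and follows essentially the same route the paper intends: a direct trilinear expansion over the generators of $L_\alpha$, $L_\beta$, $L_{\alpha+\beta}$, reduced by Corollaries \ref{corol1} and \ref{corol2} and the two-root proposition to triples with exactly one starred entry, and then evaluated with the tabulated values $\pm\frac{i}{6}m_{\alpha,\beta}$. Your sign bookkeeping checks out (indeed $\mathcal{N}_1=\frac{m}{6}\bigl[(\sigma_\alpha+\sigma_\beta)-\sigma_{\alpha+\beta}(1+\sigma_\alpha\sigma_\beta)\bigr]$ and $\mathcal{N}_2=\mathcal{N}_3=-\mathcal{N}_1$), so the four listed sign patterns give $\Nij|_L=0$ exactly as claimed.
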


\begin{remark}
A consequence of this is that in case $J_{\alpha }=J_{\beta }=\pm J_{0}$ and 
$J_{\alpha +\beta }=\mp J_{0}$ then the Nijenhuis operator restricted to $L$
is nonzero.
\end{remark}

\begin{teo}
Given $\mathcal{J}$ a generalized almost complex structure. Suppose that $%
J_\alpha$ is of non-complex type and $J_\beta$, $J_{\alpha+\beta}$ are of
complex type both with the same sign. Then the Nijenhuis operator restricted
to $L = L_\alpha \cup L_\beta \cup L_{\alpha+\beta}$ is zero. The same is
true if $J_\alpha,J_\beta$ are of complex type with different signs and $%
J_{\alpha+\beta}$ is of non-complex type.
\end{teo}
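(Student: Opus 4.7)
The plan is to verify $\Nij|_L = 0$ by a direct multilinear computation on generators. By Corollaries \ref{corol1} and \ref{corol2}, the only triples contributing to $\Nij$ have exactly one factor in $\mathfrak{u}^\ast$, so after expanding the generators of $L_\alpha$, $L_\beta$, $L_{\alpha+\beta}$ into their $\mathfrak{u}$- and $\mathfrak{u}^\ast$-parts via multilinearity I keep only the summands with a single starred component. Of the eight base triples $(u,v,w) \in L_\alpha \times L_\beta \times L_{\alpha+\beta}$, two vanish immediately by Corollary \ref{corol2} because two of the generators lie entirely in $\mathfrak{u}^\ast$, leaving six nontrivial computations, each to be evaluated using the table of nonzero three-root Nijenhuis values recorded just before the theorem.

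For the first case, I write $L_\alpha = \mathrm{span}_{\mathbb{C}}\{xA_\alpha + (a-i)A^\ast_\alpha,\, xS_\alpha + (a-i)S^\ast_\alpha\}$ (non-complex) and, with the common sign $\sigma \in \{\pm 1\}$ of the two complex-type eigenspaces, $L_\beta = \mathrm{span}_{\mathbb{C}}\{A_\beta - \sigma iS_\beta,\, A^\ast_\beta - \sigma iS^\ast_\beta\}$ and $L_{\alpha+\beta} = \mathrm{span}_{\mathbb{C}}\{A_{\alpha+\beta} - \sigma iS_{\alpha+\beta},\, A^\ast_{\alpha+\beta} - \sigma iS^\ast_{\alpha+\beta}\}$. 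Each surviving triple collapses to a sum of at most four table entries; as a representative, the star in $\Nij(xA_\alpha + (a-i)A^\ast_\alpha,\, A_\beta - \sigma iS_\beta,\, A_{\alpha+\beta} - \sigma iS_{\alpha+\beta})$ can only come from the $A^\ast_\alpha$ piece of the first argument, and the computation reduces to $(a-i)\bigl[(-\sigma i)\tfrac{i}{6}m_{\beta,-(\alpha+\beta)} + (-\sigma i)(-\tfrac{i}{6}m_{\beta,-(\alpha+\beta)})\bigr] = 0$. The other five reductions realize one of two cancellation patterns: either two table entries of opposite sign combine with equal eigenvector coefficients $-\sigma i$, or two table entries of equal sign combine with the weights $1$ and $(-\sigma i)^2 = -1$ coming from the $AA$- and $SS$-summands of the expansion. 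Either way the sum vanishes. The second case ($J_\alpha, J_\beta$ of complex type with opposite signs, $J_{\alpha+\beta}$ non-complex) follows the identical template with the non-complex block relocated to the $\alpha+\beta$-slot; now the opposite $\pm i$ signs in $L_\alpha$ and $L_\beta$ drive the analogous cancellations, and both sign choices $(J_\alpha,J_\beta) = (J_0,-J_0)$ and $(-J_0,J_0)$ work identically.

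The only real obstacle is sign bookkeeping: tracking which $\pm i$ comes from an eigenvector coefficient in the multilinear expansion and which from the $\tfrac{i}{6}$ factor in a Nijenhuis table entry, and matching them up correctly across the twelve nontrivial triples (six per case). Beyond that, the argument is a short systematic check introducing no new geometric content beyond the table of three-root values already established.
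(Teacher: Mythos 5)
Your proposal is correct and follows essentially the same route as the paper: a direct multilinear expansion of the generators of $L_\alpha$, $L_\beta$, $L_{\alpha+\beta}$, reduction to triples with exactly one covector factor via Corollaries \ref{corol1} and \ref{corol2}, and evaluation against the table of three-root values, with the stated sign hypotheses entering precisely through the two cancellation patterns you identify (equal coefficients against opposite table entries, or weights $1$ and $c_\beta c_{\alpha+\beta}=-1$, respectively $c_\alpha+c_\beta=0$ and $c_\alpha c_\beta=1$ in the second case, against matching table entries). The paper's proof is exactly this "direct calculation by multilinearity using the previously computed basic values," so no further comparison is needed.
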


\begin{proof}
The proof is a direct calculation of the Nijenhuis operator restricted to $L = L_\alpha \cup L_\beta \cup L_{\alpha+\beta}$, using all that we have until here. More specifically, we use the multilinearity of the Nijenhuis operator and its expression for the basic elements  previously calculated. 
\end{proof}

Among all possible cases, remains the case in which $J_\alpha$, $J_\beta$
and $J_{\alpha+\beta}$ are of non-complex type. Calculating the Nijenhuis
operator restricted to $L = L_\alpha \cup L_\beta \cup L_{\alpha+\beta}$, we
obtain that $\Nij|_L = 0$ if and only if 
\begin{equation}  \label{nij333}
x_\alpha x_\beta(a_{\alpha+\beta}-i)-x_\alpha x_{\alpha+\beta}(a_\beta -
i)-x_\beta x_{\alpha+\beta} (a_\alpha - i)=0.
\end{equation}
Note that, the expression (\ref{nij333}) is equivalent to 
\begin{equation*}
\left\lbrace 
\begin{array}{l}
a_{\alpha+\beta} x_\alpha x_\beta - a_\beta x_\alpha x_{\alpha+\beta} -
a_\alpha x_\beta x_{\alpha+\beta} = 0 \\ 
x_\alpha x_\beta -x_\alpha x_{\alpha+\beta} - x_\beta x_{\alpha+\beta} = 0%
\end{array}
\right.
\end{equation*}
which allows us to state the following:

\begin{teo}
\label{teo333} Given $\mathcal{J}$ a generalized almost complex strucutre.
Suppose that $J_\alpha$, $J_\beta$ and $J_{\alpha+\beta}$ are of non-complex
type. Then the Nijenhuis operator restricted to $L = L_\alpha \cup L_\beta
\cup L_{\alpha+\beta}$ is zero if and only if 
\begin{equation*}
\left\lbrace 
\begin{array}{l}
a_{\alpha+\beta} x_\alpha x_\beta - a_\beta x_\alpha x_{\alpha+\beta} -
a_\alpha x_\beta x_{\alpha+\beta} = 0 \\ 
x_\alpha x_\beta -x_\alpha x_{\alpha+\beta} - x_\beta x_{\alpha+\beta} = 0%
\end{array}
\right.
\end{equation*}
where, for each $\gamma \in \{ \alpha,\beta, \alpha+\beta\}$, we have $%
a_\gamma ^2= x_\gamma y_\gamma -1$.
\end{teo}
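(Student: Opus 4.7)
The plan is to compute $\Nij$ on every triple of basis elements of $L = L_\alpha \oplus L_\beta \oplus L_{\alpha+\beta}$ and show that the resulting system collapses to the single complex equation \eqref{nij333}. By the preceding two-root proposition together with the antisymmetry and multilinearity of $\Nij$, every triple supported in at most two of the root spaces contributes nothing. One is therefore reduced to the eight triples
\[
\Nij(u_\alpha^i, u_\beta^j, u_{\alpha+\beta}^k), \qquad i,j,k\in\{1,2\},
\]
where $u_\gamma^1 = x_\gamma A_\gamma + (a_\gamma-i)A^\ast_\gamma$ and $u_\gamma^2 = x_\gamma S_\gamma + (a_\gamma-i)S^\ast_\gamma$ span $L_\gamma$ for each $\gamma\in\{\alpha,\beta,\alpha+\beta\}$.

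Each of these eight triples expands by trilinearity into eight atomic Nijenhuis evaluations. Corollaries \ref{corol1} and \ref{corol2} kill every (pure, pure, pure), (pure, dual, dual) and (dual, dual, dual) summand, so only the three (pure, pure, dual) summands can survive. From the table of nonvanishing atomic values listed just before the statement, together with the identity $m_{\alpha,\beta}=m_{\beta,-(\alpha+\beta)}=m_{-(\alpha+\beta),\alpha}$ valid when $\alpha+\beta+\gamma=0$, each surviving atomic evaluation equals $\pm\tfrac{i}{6}m_{\alpha,\beta}$. A short case check—organized around the parity of the $A$/$S$-pattern—shows that the four triples with an even total number of $S$'s vanish identically, while the four triples with an odd total number of $S$'s each reduce to
\[
\pm\,\tfrac{i}{6}m_{\alpha,\beta}\bigl[x_\alpha x_\beta(a_{\alpha+\beta}-i) - x_\alpha x_{\alpha+\beta}(a_\beta-i) - x_\beta x_{\alpha+\beta}(a_\alpha-i)\bigr],
\]
which is a nonzero scalar multiple of the left-hand side of \eqref{nij333}.

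Hence $\Nij|_L = 0$ if and only if \eqref{nij333} holds; separating its real and imaginary parts, which is legitimate because $a_\gamma,x_\gamma\in\mathbb{R}$, yields exactly the two displayed equations of the statement. The main obstacle is not conceptual but bookkeeping: one must verify that the four informative triples all reduce to the same polynomial (up to sign) rather than to independent conditions. The cleanest way to organize this is to factor each expansion as a sum over the choice of which slot carries the dual vector, and note that the signs in the atomic table produce the pattern $(+,-,-)$ across the three slots, exactly matching the coefficients $(x_\alpha x_\beta,\,-x_\alpha x_{\alpha+\beta},\,-x_\beta x_{\alpha+\beta})$ in \eqref{nij333}.
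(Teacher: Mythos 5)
Your proposal is correct and follows essentially the same route as the paper: expand $\Nij$ by trilinearity over the spanning elements $x_\gamma A_\gamma+(a_\gamma-i)A^\ast_\gamma$, $x_\gamma S_\gamma+(a_\gamma-i)S^\ast_\gamma$, discard the terms killed by Corollaries \ref{corol1} and \ref{corol2} and by the one- and two-root propositions, and reduce the surviving triples via the atomic table to the single complex equation (\ref{nij333}), whose real and imaginary parts (using $m_{\alpha,\beta}\neq 0$) give the stated system. The only difference is that you make explicit the bookkeeping (the eight three-root triples and the $A$/$S$ parity pattern) that the paper leaves as ``calculating the Nijenhuis operator restricted to $L$''.
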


Before continuing, for any root $\alpha$ we are considering the basis $%
\mathcal{B}= \{A_\alpha,S_\alpha, -S_\alpha ^\ast, A_\alpha ^\ast\}$ for $%
\mathfrak{u}_\alpha \oplus \mathfrak{u}_\alpha ^\ast$. Observe that $%
\mathcal{B}^{\prime }=\{A_{-\alpha}, S_{-\alpha}, -S_{-\alpha}
^\ast,A_{-\alpha} ^\ast \}$ is also a basis for $\mathfrak{u}_\alpha \oplus 
\mathfrak{u}_\alpha ^\ast$. The base change matrix from $\mathcal{B}$ to $%
\mathcal{B}^{\prime }$ is 
\begin{equation*}
M = \left( 
\begin{array}{cccc}
-1 & 0 & 0 & 0 \\ 
0 & 1 & 0 & 0 \\ 
0 & 0 & -1 & 0 \\ 
0 & 0 & 0 & 1%
\end{array}
\right) = M^{-1}.
\end{equation*}
Thereby, if $J_\alpha$ is of complex type, with $J_\alpha = J_0$ in the
basis $\mathcal{B}$, changing for the basis $\mathcal{B}^{\prime }$ we
obtain $-J_0$. And, if $J_\alpha$ is of non-complex type in the basis $%
\mathcal{B}$, with $x_\alpha$, $y_\alpha$ and $a_\alpha$ satisfying $%
a_\alpha ^2=x_\alpha y_\alpha -1$, then changing for the basis $\mathcal{B}%
^{\prime }$ we obtain another matrix of non-complex type 
\begin{equation*}
\left( 
\begin{array}{cccc}
a_{-\alpha} & 0 & 0 & -x_{-\alpha} \\ 
0 & a_{-\alpha} & x_{-\alpha} & 0 \\ 
0 & -y_{-\alpha} & -a_{-\alpha} & 0 \\ 
y_{-\alpha} & 0 & 0 & -a_{\alpha}%
\end{array}
\right)
\end{equation*}
where $x_{-\alpha}=-x_\alpha$, $y_{-\alpha} = -y_\alpha$ and $a_{-\alpha} =
a_\alpha$.

Summarizing, from what was done above, the change of basis from $\mathcal{B}$
to $\mathcal{B}^{\prime }$ doesn't change the type of the structure. But,
change some signs, more specifically, if is of complex type then change from 
$J_0$ to $-J_0$ and if is of non-complex type change the signs of $x_\alpha$
and $y_\alpha$.

\begin{pps}
\label{rootsystem} Let $\mathcal{J}$ be a generalized complex structure.
Then the set 
\begin{eqnarray*}
P = \{ \alpha \in \Pi \ | \ J_\alpha \mathit{\ is \ of \ complex \ type \ with \ }
J_\alpha = J_0\} \cup \\
\{ \alpha \in \Pi \ | \ J_\alpha \mathit{\ is \ of \ non-complex \ type \ with \ }
x_\alpha >0 \}
\end{eqnarray*}
is a choice of positive roots with respect to some lexicographic order in $%
\mathfrak{h}_\mathbb{R} ^\ast$.
\end{pps}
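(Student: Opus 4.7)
The plan is to verify the two standard criteria that single out a choice of positive roots: (i) $\Pi = P \sqcup (-P)$, and (ii) $P$ is additively closed within $\Pi$, meaning $\alpha,\beta\in P$ and $\alpha+\beta\in\Pi$ implies $\alpha+\beta\in P$. Once both hold, $P$ is a positive system, and any regular element $H_0\in\mathfrak{h}_{\mathbb{R}}$ with $\alpha(H_0)>0$ for every $\alpha\in P$ induces a lexicographic order on $\mathfrak{h}_{\mathbb{R}}^{\ast}$ whose positive cone is exactly $P$ (standard fact from the theory of root systems).

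For (i) I would appeal directly to the change-of-basis calculation carried out in the paragraphs preceding the proposition: passing from $\mathcal{B}=\{A_\alpha,S_\alpha,-S_\alpha^\ast,A_\alpha^\ast\}$ to $\mathcal{B}'=\{A_{-\alpha},S_{-\alpha},-S_{-\alpha}^\ast,A_{-\alpha}^\ast\}$ turns $J_0$ into $-J_0$ in the complex case and sends $x_\alpha$ to $x_{-\alpha}=-x_\alpha$ in the non-complex case. Hence exactly one of $\alpha$ and $-\alpha$ lies in $P$.

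For (ii), since $\mathcal{J}$ is integrable, I would examine the three possible ways $\alpha,\beta\in P$ can be typed, using only the propositions and theorems already established. Case A: $J_\alpha=J_\beta=J_0$. The proposition on two complex of the same sign with a non-complex sum forces $J_{\alpha+\beta}$ complex, and the Remark ruling out $(J_0,J_0,-J_0)$ forces $J_{\alpha+\beta}=J_0$, so $\alpha+\beta\in P$. Case B: $J_\alpha=J_0$, $J_\beta$ non-complex with $x_\beta>0$ (or vice versa by symmetry). The ``two non-complex plus one complex'' proposition forces $J_{\alpha+\beta}$ complex; and the proposition ruling out ``one non-complex with two complex of opposite signs'' forces $J_{\alpha+\beta}=J_0$, giving $\alpha+\beta\in P$. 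Case C: both $J_\alpha,J_\beta$ non-complex with $x_\alpha,x_\beta>0$. Again ``two non-complex plus one complex'' forces $J_{\alpha+\beta}$ non-complex, and then Theorem \ref{teo333} yields the relation $x_{\alpha+\beta}(x_\alpha+x_\beta)=x_\alpha x_\beta$, whence $x_{\alpha+\beta}=\dfrac{x_\alpha x_\beta}{x_\alpha+x_\beta}>0$, so $\alpha+\beta\in P$.

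The main obstacle is bookkeeping rather than any deep new calculation: one must make sure that every combination of the types of $J_\alpha,J_\beta$ with $\alpha,\beta\in P$ is covered by the integrability constraints proved earlier, and in particular that the mixed case (Case B) pins down not only the type but also the correct sign of $J_{\alpha+\beta}$. Once closure and the disjoint decomposition are in hand, the conclusion that $P$ arises from a lexicographic order is immediate by choosing a regular $H_0$ in the open cone $\{H\in\mathfrak{h}_{\mathbb{R}}:\alpha(H)>0,\ \forall\alpha\in P\}$ and completing an enumeration of the $P$-simple roots to an ordered basis of $\mathfrak{h}_{\mathbb{R}}^{\ast}$.
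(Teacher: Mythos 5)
Your proposal is correct and follows essentially the same route as the paper: the paper also proves closure of $P$ by the same case analysis (both complex $J_0$; one complex $J_0$ and one non-complex with $x>0$; both non-complex, where Theorem \ref{teo333} gives $x_{\alpha+\beta}=x_\alpha x_\beta/(x_\alpha+x_\beta)>0$), then notes $\Pi=P\cup(-P)$ and invokes the standard fact that a closed set with this property is a positive system. Your extra detail — justifying $\Pi=P\sqcup(-P)$ via the $\mathcal{B}\to\mathcal{B}'$ change of basis and citing the specific earlier propositions that eliminate the forbidden types of $J_{\alpha+\beta}$ — is exactly what the paper leaves implicit, so there is no substantive difference.
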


\begin{proof}
Since $\mathcal{J}$ is integrable, let's prove that $P$ is a closed set. For this, we need to prove that given $\alpha,\beta \in P$ such that $\alpha +\beta$ is a root, then $\alpha +\beta \in P$. Thus, let $\alpha,\beta \in P$ such that $\alpha+\beta$ is a root. We need to analyze the possibilities:
\begin{itemize}
\item If $J_\alpha$ and $J_\beta$ are of complex type with $J_\alpha = J_\beta = J_0$, then the only possibility is that $J_{\alpha+\beta}$ is of complex type with $J_{\alpha+\beta}=J_0$. Therefore $\alpha+\beta \in P$;

\item If $J_\alpha$ is of complex type with $J_\alpha = J_0$ and $J_\beta$ is of non-complex type with $x_\beta >0$, then the only possibility is that $J_\alpha$ is of complex type with $J_{\alpha+\beta} = J_0$. Therefore $\alpha+\beta \in P$;

\item If $J_\alpha$ is of non-complex type with $x_\alpha >0$ and $J_\beta$ is of complex type with $J_\beta = J_0$, is analogous to the previous case;

\item If $J_\alpha$ and $J_\beta$ are of non-complex type with $x_\alpha,x_\beta >0$, then the only possibility is that $J_{\alpha+\beta}$ is of non-complex type too. Now, as $\mathcal{J}$ is integrable, we have
\[
x_\alpha x_\beta -x_\alpha x_{\alpha+\beta} - x_\beta x_{\alpha+\beta} = 0
\] 
from where we get 
\[
x_{\alpha+\beta} = \frac{x_\alpha x_\beta}{x_\alpha + x_\beta}>0
\]
because $x_\alpha,x_\beta >0$. Therefore $\alpha+\beta \in P$.
\end{itemize}
Proving that $P$ is closed. Furthermore, note that $\Pi = P \cup (-P)$. Then, we know that these properties imply that $P$ is a choice of positive roots.
\end{proof}

In particular, we have:

\begin{cor}
Let $\mathcal{J}$ be a generalized complex structure. If $J_\alpha$ is of
non-complex type for all $\alpha$, then $P =\{ \alpha \in \Pi \ | \ x_\alpha
>0 \}$ is a choice of positive roots with respect to some lexicographic
order in $\mathfrak{h}_\mathbb{R} ^\ast$.
\end{cor}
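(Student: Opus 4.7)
The plan is to reduce this corollary to a direct specialization of Proposition \ref{rootsystem}. Since the hypothesis now insists that every $J_\alpha$ be of non-complex type, the first set appearing in the union that defines $P$ in Proposition \ref{rootsystem} is empty, and the second set simplifies to $\{\alpha \in \Pi \mid x_\alpha > 0\}$. So the corollary follows immediately once one observes that the general $P$ collapses to this form under the hypothesis.

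To make this rigorous, I would first remark that because $\mathcal{J}$ is an honest generalized complex structure, it is integrable, hence Proposition \ref{rootsystem} applies. Thus the set defined there is a system of positive roots with respect to some lexicographic order in $\mathfrak{h}_{\mathbb{R}}^\ast$. Second, I would note that by assumption no root $\alpha$ has $J_\alpha$ of complex type, so the first piece of the union is empty. What remains is exactly $\{\alpha \in \Pi \mid x_\alpha > 0\}$, which is therefore a choice of positive roots.

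There is no real obstacle here; the work has already been done in Proposition \ref{rootsystem}. The only step one could elaborate on, if desired, is to re-examine the closedness argument from that proposition in the simplified setting: the only case that survives is the one in which $J_\alpha$ and $J_\beta$ are both of non-complex type with $x_\alpha, x_\beta > 0$, and there the integrability equation
\begin{equation*}
x_\alpha x_\beta - x_\alpha x_{\alpha+\beta} - x_\beta x_{\alpha+\beta} = 0
\end{equation*}
from Theorem \ref{teo333} yields $x_{\alpha+\beta} = \frac{x_\alpha x_\beta}{x_\alpha + x_\beta} > 0$, so $\alpha + \beta \in P$. Combined with the obvious identity $\Pi = P \cup (-P)$ (noting that the change of basis recalled just before Proposition \ref{rootsystem} flips the sign of $x_\alpha$ when passing to $-\alpha$), the set $P$ is a closed system of roots satisfying $\Pi = P \sqcup (-P)$, hence a choice of positive roots.
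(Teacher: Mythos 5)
Your proposal is correct and matches the paper's treatment: the corollary is stated there as an immediate specialization of Proposition \ref{rootsystem} (the case where the complex-type part of $P$ is empty), which is exactly your argument. The extra re-run of the closedness computation and the remark that $\Pi = P \cup (-P)$ via the sign flip $x_{-\alpha} = -x_\alpha$ are consistent with the proposition's proof and add nothing that conflicts with it.
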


Given $\mathcal{J}$ a generalized complex structure, note that the set of
roots $\alpha$ such that $J_\alpha$ is of non-complex type is a closed set.
Indeed, the integrability ensures that for all $\alpha, \beta$ roots, such
that $\alpha +\beta$ is a root, with $J_\alpha$ and $J_\beta$ of non-complex
type, then $J_{\alpha+\beta}$ must be of non-complex type.

\begin{teo}
\label{existetheta} Let $\mathcal{J}$ be a generalized complex structure.
Then there is a subset $\Theta \subset \Sigma$, where $\Sigma$ is a simple
root system, such that $J_\alpha$ is of non-complex type for each $\alpha
\in \langle \Theta \rangle ^+$ and of complex type for $\alpha \in \Pi^+
\backslash \langle \Theta \rangle ^+$.
\end{teo}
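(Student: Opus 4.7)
The plan is to reduce to a canonical choice of positive roots where the obstruction disappears, and then argue that the non-complex roots form the positive span of their simple part. By Proposition \ref{rootsystem} one may replace the ambient choice of positive roots by $P$, and so assume $\Pi^+=P$ and $\Sigma$ is the simple system of $P$. In this normalisation every complex $J_\alpha$ with $\alpha\in\Pi^+$ equals $J_0$ (and not $-J_0$), and every non-complex one has $x_\alpha>0$. Set $N:=\{\alpha\in\Pi^+\mid J_\alpha\text{ is of non-complex type}\}$ and $\Theta:=N\cap\Sigma$; the theorem amounts to showing $N=\langle\Theta\rangle^+$.

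The core observations, both consequences of integrability together with the forbidden-triple analysis of the previous propositions, are: \textbf{(i) Closure:} if $\alpha,\beta\in N$ and $\alpha+\beta\in\Pi^+$, then $\alpha+\beta\in N$; this is exactly the remark preceding the theorem, ruling out the triple $(N,N,C)$. \textbf{(ii) Reverse closure:} if $\alpha,\beta\in\Pi^+$ with $\alpha+\beta\in N$, then both $\alpha,\beta\in N$; indeed the triples $(C,N,N)$ and $(N,C,N)$ are ruled out by the proposition on two non-complex and one complex type, while $(C,C,N)=(J_0,J_0,N)$ is ruled out by the proposition on two same-sign complex types with a non-complex third, so the only surviving possibility is $(N,N,N)$.

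Given (i) and (ii), the identity $N=\langle\Theta\rangle^+$ follows by a standard height induction. The inclusion $\langle\Theta\rangle^+\subseteq N$ is obtained by writing each element of $\langle\Theta\rangle^+$ as a successive sum of elements of $\Theta\subseteq N$ through positive roots and iterating (i). Conversely, given $\alpha\in N$ of height one, $\alpha\in\Sigma\cap N=\Theta$; for $\mathrm{ht}(\alpha)>1$ pick a simple $\alpha_i$ with $\beta:=\alpha-\alpha_i\in\Pi^+$, apply (ii) to obtain $\alpha_i\in\Theta$ and $\beta\in N$, and invoke the inductive hypothesis on $\beta$ to conclude $\alpha=\alpha_i+\beta\in\langle\Theta\rangle^+$.

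The main conceptual obstacle is recognising the hidden change-of-simple-system concealed in ``there is a subset $\Theta\subset\Sigma$''. Without the reduction via Proposition \ref{rootsystem}, integrable triples with $(J_\alpha,J_\beta,J_{\alpha+\beta})=(J_0,-J_0,N)$ are permitted, causing Reverse closure to fail and rendering the conclusion literally false for the originally fixed $\Sigma$. Passing to $P$ forces every complex entry to have the same sign $J_0$, which is precisely what makes $(C,C,N)$ forbidden and thereby validates (ii).
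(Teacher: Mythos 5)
Your proof is correct, and it is in essence the argument the paper intends: the paper's own proof is the one-line ``it follows from the previous comment and Proposition \ref{rootsystem}'', where the ``previous comment'' is exactly your closure property (i) and Proposition \ref{rootsystem} supplies the new positive system $P$ and hence the simple system $\Sigma$ adapted to $\mathcal{J}$. What you add, and what the paper leaves implicit, is the reverse-closure property (ii) together with the height induction; this is genuinely needed, since closure of the non-complex set alone does not force it to be of the form $\langle\Theta\rangle^+$ (closed symmetric subsets of a root system need not be generated by part of a simple system), and (ii) only becomes available after the normalization $\Pi^+=P$, which kills the admissible mixed-sign triples $(J_0,-J_0,\text{non-complex})$. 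Your observation that the theorem would be literally false for the originally fixed simple system, and that Proposition \ref{rootsystem} is precisely the device that repairs this, is accurate and is the right way to read the statement ``where $\Sigma$ is a simple root system''. So: same route as the paper, but with the missing steps (ii) and the induction made explicit, which strengthens rather than departs from the published argument.
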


\begin{proof}
It follows from the previous comment and proposition \ref{rootsystem}.  
\end{proof}

The following statement is a converse to the above theorem.

\begin{teo}
\label{dadotheta} Let $\Sigma$ be a simple root system and consider $\Theta
\subset \Sigma$ a subset. Then there is a generalized complex structure, $%
\mathcal{J}$, such that $J_\alpha$ is of non-complex type for $\alpha \in
\langle \Theta \rangle ^+$ and complex type for $\alpha \in \Pi ^+
\backslash \langle \Theta \rangle ^+$.
\end{teo}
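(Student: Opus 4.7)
The plan is to define $\mathcal{J}$ block-by-block on each $\mathfrak{u}_\alpha \oplus \mathfrak{u}_\alpha^\ast$ and reduce integrability to the triple-of-roots analysis already carried out in the preceding subsection.

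First, pick $H_0 \in \mathfrak{h}_{\mathbb{R}}$ that is strictly positive on every simple root of $\Theta$, and put $\lambda(\alpha) = \alpha(H_0)$, so $\lambda$ is additive on roots and strictly positive on $\langle \Theta \rangle^+$. For $\alpha \in \langle \Theta \rangle^+$ declare $J_\alpha$ of non-complex type with
$$x_\alpha = \frac{1}{\lambda(\alpha)}, \qquad a_\alpha = 0, \qquad y_\alpha = \lambda(\alpha),$$
so that $a_\alpha^2 - x_\alpha y_\alpha = -1$ and $x_\alpha y_\alpha \neq 0$, as required. For $\alpha \in \Pi^+ \setminus \langle \Theta \rangle^+$ declare $J_\alpha = J_0$ of complex type. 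Taking $\mathcal{J} = \bigoplus_{\alpha > 0} J_\alpha$ then yields an invariant generalized almost complex structure of precisely the prescribed type.

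Second, verify integrability triple-by-triple. The Nijenhuis contributions on at most two distinct roots vanish automatically by the propositions proved earlier, and on three roots $\alpha,\beta,\gamma$ they vanish unless $\alpha+\beta+\gamma = 0$; so it remains to inspect triples $(\alpha,\beta,\alpha+\beta)$ of positive roots with $\alpha+\beta$ a root. Writing $\operatorname{supp}(\gamma) \subseteq \Sigma$ for the set of simple roots appearing in $\gamma$, one has $\gamma \in \langle \Theta \rangle^+$ iff $\operatorname{supp}(\gamma) \subseteq \Theta$, and $\operatorname{supp}(\alpha+\beta) = \operatorname{supp}(\alpha) \cup \operatorname{supp}(\beta)$. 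This forces one of three configurations: (i) all three in $\langle\Theta\rangle^+$, so all of non-complex type; (ii) all three in $\Pi^+ \setminus \langle \Theta \rangle^+$, so all complex with common sign $J_0$; (iii) exactly one of $\alpha,\beta$ in $\langle \Theta \rangle^+$ with $\alpha+\beta$ outside, so one non-complex and two complex with common sign $J_0$. Case (ii) is covered by the first row of the table in Theorem 3.3, case (iii) by the theorem preceding Theorem \ref{teo333}, and for case (i) the equations of Theorem \ref{teo333},
$$\frac{1}{x_{\alpha+\beta}} = \frac{1}{x_\alpha} + \frac{1}{x_\beta} \qquad\text{and}\qquad \frac{a_{\alpha+\beta}}{x_{\alpha+\beta}} = \frac{a_\alpha}{x_\alpha} + \frac{a_\beta}{x_\beta},$$
are immediate from the additivity of $\lambda$ together with $a_\gamma \equiv 0$.

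The main obstacle is to recognize that the combinatorics of $\Theta$-support automatically excludes the mixed configurations known to be incompatible with integrability (such as two non-complex and one complex, or two complex of opposite signs and one non-complex): each such configuration would require a simple root outside $\Theta$ to cancel in the sum of two positive roots, which is impossible. Once this combinatorial remark is in place, the three remaining configurations are matched directly against the integrability statements already proved, and no further calculation is needed.
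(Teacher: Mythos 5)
Your proposal is correct, and its skeleton is the same as the paper's: reduce to triples $(\alpha,\beta,\alpha+\beta)$ of positive roots, check that the prescription ``non-complex on $\langle\Theta\rangle^+$, complex type $J_0$ off it'' only ever produces the three configurations already shown to be integrable, and invoke the corresponding results (the $J_0,J_0,J_0$ row of the table, the mixed theorem with one non-complex and two complex of equal sign, and Theorem \ref{teo333}). The differences are worth noting, and they are in your favor. First, where the paper argues that $\alpha\in\Pi^+\setminus\langle\Theta\rangle^+$, $\beta\in\langle\Theta\rangle^+$ forces $\alpha+\beta\notin\langle\Theta\rangle^+$ via $\alpha=(\alpha+\beta)+(-\beta)$, you use the support identity $\mathrm{supp}(\alpha+\beta)=\mathrm{supp}(\alpha)\cup\mathrm{supp}(\beta)$; these are equivalent, but your version makes it transparent that \emph{all} forbidden mixed configurations (two non-complex and one complex, or complex blocks of opposite sign) are excluded at once. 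Second, and more substantially, in the all-non-complex case the paper's proof only asserts that one ``can put'' $J_\alpha,J_\beta,J_{\alpha+\beta}$ satisfying the system of Theorem \ref{teo333}, leaving the simultaneous solvability over \emph{all} triples in $\langle\Theta\rangle^+$ to the later subsection on solutions (where parameters are fixed on simple roots and propagated by height, with a well-definedness check). Your explicit choice $a_\alpha=0$, $x_\alpha=1/\alpha(H_0)$, $y_\alpha=\alpha(H_0)$ with $H_0$ positive on $\Theta$ settles this consistency issue in one line, since the system is equivalent to additivity of $1/x_\alpha$ and of $a_\alpha/x_\alpha$ on roots; this is a clean special case of the paper's general solution and makes the existence argument self-contained.
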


\begin{proof}
Since $\langle \Theta \rangle$ is a closed set, then $\langle \Theta \rangle ^+$ is closed too. Thus, given $\alpha,\beta \in \langle \Theta \rangle ^+$ with $\alpha + \beta \in \Pi$, we have $\alpha + \beta \in \langle \Theta \rangle ^+$. Therefore, we can put $J_\alpha$, $J_\beta$ and $J_{\alpha+\beta}$ of non-complex type satisfying 
\[
\left\lbrace \begin{array}{l}
a_{\alpha+\beta} x_\alpha x_\beta - a_\beta x_\alpha x_{\alpha+\beta} - a_\alpha x_\beta x_{\alpha+\beta} = 0 \\
x_\alpha x_\beta - x_\alpha x_{\alpha+\beta} - x_\beta x_{\alpha+\beta} = 0.
\end{array}\right.
\]
This way the Nijenhuis operator restricted to $L = L_\alpha \cup L_\beta \cup L_{\alpha + \beta}$ is zero.

In the cases which $\alpha,\beta \in \Pi^+ \backslash \langle \Theta \rangle ^+$ with $\alpha + \beta \in \Pi$, we have $\alpha + \beta \in \Pi^+ \backslash \langle \Theta \rangle ^+$. Then, for these roots, we put $J_\alpha, J_\beta, J_{\alpha+\beta}$ of complex type with $J_\alpha = J_\beta = J_{\alpha + \beta} = J_0$, which defines a structure satisfying $\Nij|_L = 0$, where $L = L_\alpha \cup L_\beta \cup L_{\alpha + \beta}$

Now, in the case which $\alpha \in \Pi^+ \backslash \langle \Theta \rangle ^+$ and $\beta \in \langle \Theta \rangle ^+$, with $\alpha + \beta \in \Pi$, we must have $\alpha + \beta \in \Pi^+ \backslash \langle \Theta \rangle ^+$. In fact, suppose $\alpha$ and $\beta$ as before, such that $\alpha + \beta \in 
\langle \Theta \rangle ^+$. Since $\beta \in  \langle \Theta \rangle$, we have $-\beta \in  \langle \Theta \rangle$, and then $\alpha = (\alpha + \beta) + (-\beta) \in  \langle \Theta \rangle$ which contradicts the hypothesis. Therefore, in this case, we have $\alpha + \beta \in \Pi^+ \backslash \langle \Theta \rangle ^+$. This way, we put $J_\alpha$ and $J_{\alpha+ \beta}$ of complex type with $J_\alpha = J_{\alpha + \beta} = J_0$ and $J_{\beta}$ of non-complex type. Thus defining a structure whose Nijenhuis operator is zero when restricted to the $i$-eigeinspace.

Therefore, $\mathcal{J}$ defined in this way is an integrable generalized complex structure.  
\end{proof}

\subsection{Solutions for the non-complex type}

Let $\mathcal{J}$ be a generalized almost complex structure such that there
exist $\alpha $, $\beta $ and ${\alpha +\beta }$ roots that $J_{\alpha }$, $%
J_{\beta }$ and $J_{\alpha +\beta }$ are of non-complex type. We saw that
the Nijenhuis operator restricted to $L=L_{\alpha }\cup L_{\beta }\cup
L_{\alpha +\beta }$ is zero if and only if the equations 
\begin{equation}
\left\{ 
\begin{array}{l}
a_{\alpha +\beta }x_{\alpha }x_{\beta }-a_{\beta }x_{\alpha }x_{\alpha
+\beta }-a_{\alpha }x_{\beta }x_{\alpha +\beta }=0 \\ 
x_{\alpha }x_{\beta }-x_{\alpha }x_{\alpha +\beta }-x_{\beta }x_{\alpha
+\beta }=0%
\end{array}%
\right.  \label{sistema}
\end{equation}%
are satisfied.

We are interested to know when there are $x_\alpha$, $x_\beta$, $x_{\alpha +
\beta}$, $a_\alpha$, $a_\beta$ e $a_{\alpha + \beta}$ which satisfy the
system (\ref{sistema}), where $x_\alpha,x_\beta,x_{\alpha + \beta} \not= 0$.
Manipulating the system we get that the system is satisfied when 
\begin{equation*}
x_{\alpha + \beta}=\frac{x_\alpha x_\beta}{x_\alpha + x_\beta} \ \mathrm{
\ and } \ a_{\alpha + \beta} = \frac{a_\beta x_\alpha + a_\alpha x_\beta}{%
x_\alpha + x_\beta}.
\end{equation*}
Thus, given $a_\alpha$, $a_\beta$, $x_\alpha$, $x_\beta$ the only solution
for (\ref{sistema}) is $a_{\alpha + \beta}$ and $x_{\alpha + \beta}$ as
above.

The problem is when we have more than one such triple of roots. To explain
this, let's do an example:

\begin{ex}
Consider the Lie algebra $A_{3}$ with simple roots $\Sigma =\{\alpha ,\beta
,\gamma \}$ and positive roots $\alpha +\beta $, $\beta +\gamma $ and $%
\alpha +\beta +\gamma $. Let $\mathcal{J}$ be a generalized almost complex
structure such that $J_{\delta }$ is of non-complex type for all root $%
\delta $. We want that $\mathcal{J}$ be integrable, that is, for all triple
of roots $\alpha _{1}$, $\alpha _{2}$ and $\alpha _{1}+\alpha _{2}$, we need
that the system (\ref{sistema}) be satisfied. Our idea is to take values for 
$J_{\delta }$, where $\delta $ is a simple root, and find for which values
of $J_{\eta }$ the system is satisfied, where $\eta $ is a positive root of
height greater than one. In this example, we have that the roots of height
two can be written uniquely as sum of two simple roots, but the root $\alpha
+\beta +\gamma $ can be written of two ways as sum of two roots, $(\alpha
+\beta )+\gamma $ and $\alpha +(\beta +\gamma )$. Then we have to be
careful, because all possible system must be satisfied.

Fix $a_{\delta }$, $x_{\delta }$ for all root $\delta \in \Sigma $. We will
analyse all possible system: First, consider the triple $\alpha $, $\beta $
and $\alpha +\beta $. We have the system 
\begin{equation*}
\left\{ 
\begin{array}{l}
a_{\alpha +\beta }x_{\alpha }x_{\beta }-a_{\beta }x_{\alpha }x_{\alpha
+\beta }-a_{\alpha }x_{\beta }x_{\alpha +\beta }=0 \\ 
x_{\alpha }x_{\beta }-x_{\alpha }x_{\alpha +\beta }-x_{\beta }x_{\alpha
+\beta }=0,%
\end{array}%
\right.
\end{equation*}%
which has solution 
\begin{equation*}
x_{\alpha +\beta }=\frac{x_{\alpha }x_{\beta }}{x_{\alpha }+x_{\beta }}\ 
\mathrm{\ and}\ a_{\alpha +\beta }=\frac{a_{\beta }x_{\alpha }+a_{\alpha
}x_{\beta }}{x_{\alpha }+x_{\beta }}.
\end{equation*}%
Now, consider the triple $\beta $, $\gamma $ and $\beta +\gamma $. We have
the same system as above changing $\alpha $ by $\gamma $. Then, we have 
\begin{equation*}
x_{\beta +\gamma }=\frac{x_{\beta }x_{\gamma }}{x_{\beta }+x_{\gamma }}\ 
\mathrm{\ and}\ a_{\beta +\gamma }=\frac{a_{\gamma }x_{\beta }+a_{\beta
}x_{\gamma }}{x_{\beta }+x_{\gamma }}.
\end{equation*}%
For the triple $\alpha $, $\beta +\gamma $ and $\alpha +\beta +\gamma $, we
have a similar system which has solution 
\begin{equation*}
x_{\alpha +\beta +\gamma }=\frac{x_{\alpha }x_{\beta +\gamma }}{x_{\alpha
}+x_{\beta +\gamma }}\ \mathrm{\ and}\ a_{\alpha +\beta +\gamma }=\frac{%
a_{\beta +\gamma }x_{\alpha }+a_{\alpha }x_{\beta +\gamma }}{x_{\alpha
}+x_{\beta +\gamma }}.
\end{equation*}%
On the other hand, we have the triple $\alpha +\beta $, $\gamma $ and $%
\alpha +\beta +\gamma $, where we get another expression for $a_{\alpha
+\beta +\gamma }$ and $x_{\alpha +\beta +\gamma }$: 
\begin{equation*}
x_{\alpha +\beta +\gamma }=\frac{x_{\alpha +\beta }x_{\gamma }}{x_{\alpha
+\beta }+x_{\gamma }}\ \mathnormal{\ }\mathrm{and}\ a_{\alpha +\beta +\gamma
}=\frac{a_{\gamma }x_{\alpha +\beta }+a_{\alpha +\beta }x_{\gamma }}{%
x_{\alpha +\beta }+x_{\gamma }}.
\end{equation*}%
We have to verify that both expressions are equal. We already have
expressions for $x_{\beta +\gamma }$, $a_{\beta +\gamma }$, $x_{\alpha
+\beta }$ and $a_{\alpha +\beta }$. Replacing in the expressions above, we
have for both cases: 
\begin{equation*}
x_{\alpha +\beta +\gamma }=\frac{x_{\alpha }x_{\beta }x_{\gamma }}{x_{\alpha
}x_{\beta }+x_{\alpha }x_{\gamma }+x_{\beta }x_{\gamma }}
\end{equation*}%
and 
\begin{equation*}
a_{\alpha +\beta +\gamma }=\frac{a_{\gamma }x_{\alpha }x_{\beta }+a_{\beta
}x_{\alpha }x_{\gamma }+a_{\alpha }x_{\beta }x_{\gamma }}{x_{\alpha
}x_{\beta }+x_{\alpha }x_{\gamma }+x_{\beta }x_{\gamma }},
\end{equation*}%
proving that the expression is well defined. Therefore, setting the values
of $x_{\delta }$ and $a_{\delta }$, for the simple roots $\delta $, we can
obtain values for the other values of $x$ and $a$ associated to the positive
roots of height greater than one such that $\mathcal{J}$ is integrable.
\end{ex}

In the general case, let $\mathcal{J}$ be a generalized almost complex
structure. If $\mathcal{J}$ is integrable, by Theorem \ref{existetheta},
there is a $\Theta \subset \Sigma$ such that $J_\alpha$ is of non-complex
type for all $\alpha \in \langle \Theta \rangle$. Furthermore, by
Proposition \ref{rootsystem}, we can suppose that $x_\alpha >0$ for all $%
\alpha \in \langle \Theta \rangle ^+$. Again, we fix the values for $%
x_\alpha $ and $a_\alpha$ for all $\alpha \in \Theta$. We proceed solving
the system for the roots of height two, three, four and so on. Then, denote $%
\Theta = \{ \alpha_1, \cdots ,\alpha_l\}$ and fix $a_{\alpha_i},x_{\alpha_i}$
for $i=1,\cdots ,l$. Given a root $\alpha \in \langle \Theta \rangle$, then $%
\alpha$ can be written as $\alpha = n_1 \alpha_1 +\cdots + n_l \alpha_l$.
Proceeding as above, we obtain that 
\begin{equation}
x_{\alpha} = \frac{x_1 ^{n_1} \cdots x_l ^{n_l}}{\displaystyle\sum_{i=1} ^l
n_ix_{\alpha_1} ^{n_1}\cdots x_{\alpha_{i-1}} ^{n_{i-1}} x_{\alpha_i} ^{n_i
-1} x_{\alpha_{i+1}} ^{n_{i+1}} \cdots x_{\alpha_l} ^{n_l}}
\end{equation}
and 
\begin{equation}
a_{\alpha} = \frac{\displaystyle\sum_{i=1} ^l a_{\alpha_i}n_ix_{\alpha_1}
^{n_1}\cdots x_{\alpha_{i-1}} ^{n_{i-1}} x_{\alpha_i} ^{n_i -1}
x_{\alpha_{i+1}} ^{n_{i+1}} \cdots x_{\alpha_l} ^{n_l}}{\displaystyle%
\sum_{i=1} ^l n_ix_{\alpha_1} ^{n_1}\cdots x_{\alpha_{i-1}} ^{n_{i-1}}
x_{\alpha_i} ^{n_i -1} x_{\alpha_{i+1}} ^{n_{i+1}} \cdots x_{\alpha_l} ^{n_l}%
}
\end{equation}
is solution for the system for every $n_1, \cdots , n_l$ such that $n_1
\alpha_1 + \cdots + n_l \alpha_l$ is a root in $\langle \Theta \rangle$.


\section{Twisted Generalized Complex Structures on $\mathbb{F}$}

As were mentioned before, \v{S}evera and Weinstein noticed a twisted Courant
bracket $[\cdot ,\cdot ]_{\Omega }$ for each closed $3$-form $\Omega $ which
is defined by 
\begin{equation*}
\lbrack X+\xi ,Y+\eta ]_{\Omega }=[X+\xi ,Y+\eta ]+i_{Y}i_{X}\Omega .
\end{equation*}

Since $H^{3}(\mathbb{F})=\{0\}$ we have that every closed $3$-form is exact.
Because of this, let $\omega =\sum \omega _{\alpha ,\beta }X_{\alpha }^{\ast
}\wedge X_{\beta }^{\ast }$ be an invariant $2$-form. Relying on the
invariance of $\omega $ its exterior differential is easily computed from a
standard formula, then we have: 
\begin{equation*}
d\omega (X_{\alpha },X_{\beta },X_{\gamma })=-\omega ([X_{\alpha },X_{\beta
}],X_{\gamma })+\omega ([X_{\alpha },X_{\gamma }],X_{\beta })-\omega
([X_{\beta },X_{\gamma }],X_{\alpha }).
\end{equation*}

If we denote $\Omega = d\omega$, thus we have that $\Omega$ is an invariant $%
3$-form. Moreover, since $d^2 = 0$ we have that $\Omega$ is closed. From
this we have that $\Omega (X_\alpha,X_\beta,X_\gamma)$ is zero unless $%
\alpha+\beta+\gamma = 0$. In this case 
\begin{equation*}
\Omega (X_\alpha,X_\beta,X_\gamma) = m_{\alpha,\beta} \left(
\omega(X_\alpha,X_{-\alpha})+\omega (X_\beta,X_{-\beta})+\omega
(X_\gamma,X_{-\gamma}) \right).
\end{equation*}
With this, from a simple verification we have 
\begin{equation*}
\Omega (X_{-\alpha},X_{-\beta},X_{-\gamma}) = \Omega
(X_\alpha,X_\beta,X_\gamma)
\end{equation*}
since that $m_{\alpha,\beta} = -m_{-\alpha,-\beta}$ and $m_{\alpha,%
\beta}=m_{\beta,\gamma}=m_{\gamma,\alpha}$, where the last one is true when $%
\alpha +\beta+\gamma = 0$.

Now consider $A_\alpha = X_\alpha - X_{-\alpha}$ and $S_\alpha =
i(X_\alpha+X_{-\alpha})$. When we compute the value of $\Omega$ for this
elements, we obtain: 
\begin{eqnarray*}
\Omega (A_\alpha,A_\beta,S_{\alpha+\beta}) & = & -\Omega
(A_\alpha,S_\beta,A_{\alpha+\beta}) \\
& = & -\Omega (S_\alpha,A_\beta,A_{\alpha+\beta}) \\
& = & -\Omega (S_\alpha,S_\beta,S_{\alpha+\beta}) \\
& = & 2i\Omega (X_\alpha,X_\beta,X_{-(\alpha+\beta)})
\end{eqnarray*}
and the other cases are zero.

We are interested in verifying when $\Nij _\Omega |_L$ is zero, where $L$ is
the $i$-eigenspace of a generalized almost complex structure. By definition,
we have 
\begin{equation*}
\Nij_\Omega (X+\xi , Y+\eta,Z+\zeta ) = \Nij (X+\xi , Y+\eta,Z+\zeta )+
\Omega (X,Y,Z).
\end{equation*}
Then we can prove:

\begin{teo}
Let $\mathcal{J}$ be a generalized almost complex structure and $\Omega$ an
invariant closed $3$-form. Suppose there is a triple of roots $%
\alpha,\beta,\alpha+\beta$ such that $J_\alpha$, $J_\beta$ and $%
J_{\alpha+\beta}$ are not of non-complex type simultaneously. Then $\Nij|_L
= 0$ if and only if $\Nij_\Omega |_L = 0$, where $L= L_\alpha \cup L_\beta
\cup L_{\alpha+\beta}$.
\end{teo}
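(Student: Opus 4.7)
The plan hinges on the identity $\Nij_\Omega(A,B,C)=\Nij(A,B,C)+\Omega(X_A,X_B,X_C)$, where $X_A$, $X_B$, $X_C$ denote the tangent parts of $A$, $B$, $C$. Since $\Omega$ depends only on the tangent components, the discrepancy between $\Nij$ and $\Nij_\Omega$ is automatically killed on any triple having at least one pure-covector argument, and the hypothesis of the theorem is exactly what guarantees that enough such pure-covector generators are available inside $L=L_\alpha\cup L_\beta\cup L_{\alpha+\beta}$.

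For the forward direction $\Nij|_L=0\Rightarrow\Nij_\Omega|_L=0$ I would invoke the classification established in the preceding subsection, namely the table of integrable $(J_\alpha,J_\beta,J_{\alpha+\beta})$ configurations together with the propositions leading to it. In every such configuration compatible with the hypothesis, I would list the root directions present in the tangent parts of the $L_\gamma$: a complex slot of sign $J_0$ contributes only $X_\gamma$, a complex slot of sign $-J_0$ contributes only $X_{-\gamma}$, and a non-complex slot contributes both $X_\gamma$ and $X_{-\gamma}$. A direct inspection then shows that none of the sign choices in $\pm\alpha\pm\beta\pm(\alpha+\beta)=0$ can be realised within a compatible configuration, so from the vanishing property $\Omega(X_\alpha,X_\beta,X_\gamma)=0$ whenever $\alpha+\beta+\gamma\neq 0$ we conclude $\Omega\equiv 0$ on the tangent parts of $L$, whence $\Nij_\Omega|_L=0$.

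For the converse I would argue by contrapositive: assuming $\Nij|_L\neq 0$, I exhibit a triple witnessing $\Nij_\Omega|_L\neq 0$. The hypothesis supplies at least one slot $\gamma\in\{\alpha,\beta,\alpha+\beta\}$ of complex type, so $L_\gamma$ contains the pure-covector generator $A^\ast_\gamma\mp iS^\ast_\gamma$. Any triple using this element in the $\gamma$-slot automatically satisfies $\Omega=0$, so that $\Nij_\Omega$ and $\Nij$ agree on such a triple. The nonvanishing configurations have already been established in the previous subsection (the propositions covering two-complex-same-sign with one non-complex, two non-complex with one complex, and one non-complex with opposite-sign complex partners, together with the remark on $J_\alpha=J_\beta=\pm J_0$, $J_{\alpha+\beta}=\mp J_0$), and in every case this was done precisely by evaluating $\Nij$ on a triple of the stated form. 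The same triple therefore certifies $\Nij_\Omega|_L\neq 0$. The only configuration not addressed by an explicit proposition, all three slots complex with opposite signs as in the remark, is handled by a short multilinear expansion of $\Nij(A_\alpha\mp iS_\alpha,A_\beta\mp iS_\beta,A^\ast_{\alpha+\beta}\pm iS^\ast_{\alpha+\beta})$: collecting the four nonzero basic terms from the list in the previous subsection yields a nonzero multiple of $m_{\alpha,\beta}$.

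The main obstacle is the case-by-case bookkeeping: confirming in every $\Nij|_L=0$ configuration that the tangent directions available in $L$ do not realise a zero-sum root triple (so that $\Omega|_L$ vanishes), and locating, in every $\Nij|_L\neq 0$ configuration, a pure-covector-containing triple on which $\Nij$ is nonzero. No genuinely new computation is required beyond the Nijenhuis values already tabulated for the basic elements $A_\gamma,S_\gamma,A^\ast_\gamma,S^\ast_\gamma$; the work reduces to a careful enumeration of root sign combinations together with routine bilinearity expansions.
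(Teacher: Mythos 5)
Your proposal is correct and follows essentially the same route as the paper: the forward direction amounts to checking that $\Omega$, which only sees tangent parts and only pairs zero-sum root triples, vanishes on the tangent parts of $L$ in the configurations allowed by $\Nij|_L=0$, while the converse exploits the pure-covector generator $A^\ast_\delta \pm iS^\ast_\delta$ coming from the complex-type slot, on which $\Nij_\Omega$ and $\Nij$ coincide, combined with the previously tabulated nonvanishing witnesses (the paper states this direction directly rather than by contrapositive, but the content is identical). Your supplementary expansion for the all-complex mixed-sign case is also consistent, giving the nonzero value $\tfrac{2}{3}m_{\alpha,\beta}$.
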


\begin{proof}
Suppose that $\Nij|_L=0$. To prove that $\Nij_\Omega |_L=0$ we need to prove that the restriction of $\Omega$ to the vector part of $L$ is zero. However, this is a consequence of 
\begin{eqnarray*}
\Omega (A_\alpha,A_\beta,S_{\alpha+\beta}) & = & -\Omega (A_\alpha,S_\beta,A_{\alpha+\beta})\\
& = & -\Omega (S_\alpha,A_\beta,A_{\alpha+\beta})\\
& = & -\Omega (S_\alpha,S_\beta,S_{\alpha+\beta})\\
& = & 2i\Omega (X_\alpha,X_\beta,X_{-(\alpha+\beta)}) 
\end{eqnarray*}
and the other cases are zero.

On the other hand, suppose $\Nij_\Omega |_L  = 0$. Since there is a triple of roots such that are not all of non-complex type, then there is $\delta \in \{\alpha,\beta,\alpha+\beta\}$ such that $J_\delta$ is of complex type and, in this case, we have $A_\delta ^\ast - iS_\delta ^\ast$ or $A_\delta ^\ast + iS_\delta ^\ast$ belongs to $L$. Therefore, we will have
\[
0 = \Nij_\Omega (X+\xi ,Y+\eta,A^\ast _\delta \pm iS^\ast _\delta  ) = \Nij (X+\xi ,Y+\eta,A^\ast _\delta \pm iS^\ast _\delta  ) 
\]
and, from the computations already done for the standard Nijenhuis operator, we have that if this occurs for all $X+\xi,Y+\eta \in L$, where $L=L_\alpha\cup L_\beta\cup L_{\alpha+\beta}$, then $\Nij |_L = 0$.
\end{proof}

Now, when there exists a triple of roots $\alpha,\beta,\alpha+\beta$ such
that $J_\alpha$, $J_\beta$ and $J_{\alpha+\beta}$ are all of non-complex
type, then we have:

\begin{teo}
Let $\mathcal{J}$ be a generalized almost complex structure and $\Omega$ an
invariant closed $3$-form. Suppose there is a triple of roots $%
\alpha,\beta,\alpha+\beta$ such that $J_\alpha$, $J_\beta$ and $%
J_{\alpha+\beta}$ are of non-complex type simultaneously. If $\Nij|_L = 0$,
then $\Nij_\Omega |_L = 0$ if and only if $\Omega (X_\alpha, X_\beta,
X_{-(\alpha+\beta)})=0$, where $L= L_\alpha \cup L_\beta \cup
L_{\alpha+\beta}$.
\end{teo}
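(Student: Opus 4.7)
The plan is to reduce everything to computing $\Omega$ on the vector parts of elements of $L = L_\alpha \cup L_\beta \cup L_{\alpha+\beta}$. The starting point is the identity
\begin{equation*}
\Nij_{\Omega}(A,B,C) = \Nij(A,B,C) + \Omega(X,Y,Z),
\end{equation*}
where $A = X+\xi$, $B=Y+\eta$, $C=Z+\zeta$. Since we are given $\Nij|_L = 0$, the statement $\Nij_{\Omega}|_L = 0$ is equivalent to $\Omega$ vanishing on every triple of vector parts of elements of $L$.

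Next I would use the explicit description of $L_\gamma$ for $J_\gamma$ of non-complex type, namely
\begin{equation*}
L_\gamma = \mathrm{span}_{\mathbb{C}}\{x_\gamma A_\gamma + (a_\gamma - i)A_\gamma^{\ast},\ x_\gamma S_\gamma + (a_\gamma - i)S_\gamma^{\ast}\},
\end{equation*}
so the vector part of an arbitrary element of $L_\gamma$ is a complex linear combination of $x_\gamma A_\gamma$ and $x_\gamma S_\gamma$. Since $J_\alpha,J_\beta,J_{\alpha+\beta}$ are all of non-complex type we have $x_\alpha,x_\beta,x_{\alpha+\beta}\neq 0$, so the vector parts of $L_\alpha,L_\beta,L_{\alpha+\beta}$ span $\mathrm{span}_{\mathbb{C}}\{A_\alpha,S_\alpha\}$, $\mathrm{span}_{\mathbb{C}}\{A_\beta,S_\beta\}$ and $\mathrm{span}_{\mathbb{C}}\{A_{\alpha+\beta},S_{\alpha+\beta}\}$ respectively. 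By multilinearity of $\Omega$, the claim reduces to the vanishing of $\Omega$ on the eight triples built from $\{A_\gamma,S_\gamma\}$ with $\gamma\in\{\alpha,\beta,\alpha+\beta\}$.

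At this point the theorem follows directly from the computation of $\Omega$ on these basic triples already done just before the statement: only the four triples
\begin{equation*}
(A_\alpha,A_\beta,S_{\alpha+\beta}),\ (A_\alpha,S_\beta,A_{\alpha+\beta}),\ (S_\alpha,A_\beta,A_{\alpha+\beta}),\ (S_\alpha,S_\beta,S_{\alpha+\beta})
\end{equation*}
give a nonzero value, and all four values equal $\pm 2i\,\Omega(X_\alpha,X_\beta,X_{-(\alpha+\beta)})$. Consequently, $\Omega$ restricted to the vector parts of $L$ is identically zero if and only if the single scalar $\Omega(X_\alpha,X_\beta,X_{-(\alpha+\beta)})$ vanishes, which is exactly the stated condition.

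The proof is essentially a bookkeeping argument; the only point that requires any care is the use of $x_\alpha,x_\beta,x_{\alpha+\beta}\neq 0$ to guarantee that the vector parts of the $L_\gamma$'s actually span all of $\mathrm{span}_{\mathbb{C}}\{A_\gamma,S_\gamma\}$ (without this one could only deduce a weaker proportional condition). Once this is in place the equivalence is immediate from the preceding table of values for $\Omega$.
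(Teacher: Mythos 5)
Your proof is correct and takes essentially the same route as the paper: both reduce $\Nij_{\Omega}|_{L}=0$ to the vanishing of $\Omega$ on the vector parts of $L$ via the identity $\Nij_{\Omega}=\Nij+\Omega$ together with the hypothesis $\Nij|_{L}=0$, and then use the precomputed values of $\Omega$ on the triples from $\{A_{\gamma},S_{\gamma}\}$ plus $x_{\gamma}\neq 0$. The only cosmetic difference is that the paper's forward direction evaluates $\Nij_{\Omega}$ on the single triple of $S$-type generators, whereas you phrase it through the spanning/multilinearity argument; the content is identical.
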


\begin{proof}
Suppose $\Nij_\Omega |_L=0$. Since $\Nij|_L=0$, in particular we have
\begin{eqnarray*}
0 & = & \Nij_\Omega (x_\alpha S_\alpha+(a_\alpha - i)S^\ast _\alpha, x_\beta S_\beta+(a_\beta - i)S^\ast _\beta,x_{\alpha+\beta} S_{\alpha+\beta}+(a_{\alpha+\beta} - i)S^\ast _{\alpha+\beta}) \\
& = & \Omega (x_\alpha S_\alpha, x_\beta S_\beta , x_{\alpha+\beta} S_{\alpha+\beta})\\
& = & -2ix_\alpha x_\beta x_{\alpha+\beta} \Omega (X_\alpha, X_\beta, X_{-(\alpha+\beta)}).
\end{eqnarray*}

Reciprocally, suppose that $\Omega (X_\alpha,X_\beta,X_{-(\alpha+\beta)}) = 0$, thus we obtain
\begin{eqnarray*}
\Omega (A_\alpha , A_\beta ,S_{\alpha+\beta}) & = & \Omega (A_\alpha , S_\beta ,A_{\alpha+\beta})\\
& = & \Omega (S_\alpha , A_\beta ,A_{\alpha+\beta})\\
& = & \Omega (S_\alpha , S_\beta ,S_{\alpha+\beta})\\
& = & 0
\end{eqnarray*}
and the other cases were already zero, ensuring that $\Nij_\Omega |_L = 0$.
\end{proof}

\begin{cor}
Let $\mathcal{J}$ be an integrable generalized complex structure such that $%
J_\alpha$ is of non-complex type for all root $\alpha$ and consider $\Omega$
an invariant closed $3$-form. Then $\mathcal{J}$ is $\Omega$-integrable if
and only if $\Omega$ is zero.
\end{cor}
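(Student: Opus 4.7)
The plan is to exploit the fact that, under the hypothesis, every triple of roots $\alpha,\beta,\alpha+\beta$ falls into the case of the preceding theorem, so $\Omega$-integrability forces $\Omega$ to vanish on every triple of root vectors that can possibly contribute. The ``if'' direction is immediate: if $\Omega=0$ then $\Nij_\Omega=\Nij$, and since $\mathcal{J}$ is integrable we have $\Nij|_L=0$ on the $i$-eigenspace, hence $\Nij_\Omega|_L=0$ and $\mathcal{J}$ is $\Omega$-integrable.

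For the nontrivial direction, assume $\mathcal{J}$ is $\Omega$-integrable. Fix any roots $\alpha,\beta$ such that $\alpha+\beta$ is a root. By hypothesis $J_\alpha$, $J_\beta$ and $J_{\alpha+\beta}$ are all of non-complex type simultaneously, so the previous theorem applies to the subspace $L=L_\alpha\cup L_\beta\cup L_{\alpha+\beta}$: from $\Nij|_L=0$ (integrability of $\mathcal{J}$) together with $\Nij_\Omega|_L=0$ we conclude $\Omega(X_\alpha,X_\beta,X_{-(\alpha+\beta)})=0$. Since $\alpha$ and $\beta$ were arbitrary, this vanishing holds for every admissible triple.

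To finish, recall from the discussion preceding the theorem that invariance of $\Omega$ forces $\Omega(X_\alpha,X_\beta,X_\gamma)=0$ unless $\alpha+\beta+\gamma=0$; when this does hold, $\gamma=-(\alpha+\beta)$ must itself be a root, and the value has just been shown to vanish. Hence $\Omega$ is zero on every triple of basis root vectors of $\mathfrak{m}_{\mathbb{C}}$, and by multilinearity $\Omega=0$ at the origin; $U$-invariance then yields $\Omega=0$ globally. The only subtle step is the bookkeeping in the final paragraph, namely verifying that the triples $(\alpha,\beta,-(\alpha+\beta))$ exhaust all possible supports of $\Omega$ on root vectors; once that is in place the conclusion is immediate.
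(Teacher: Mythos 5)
Your argument is correct and is exactly the route the paper intends: the corollary is an immediate consequence of the preceding theorem applied to every triple $\alpha,\beta,\alpha+\beta$, combined with the earlier observation that an invariant $3$-form can only be nonzero on triples of root vectors whose roots sum to zero. Your final bookkeeping step (that $\alpha+\beta+\gamma=0$ with all three roots forces $\gamma=-(\alpha+\beta)$, so these triples exhaust the possible support) is the right and only point needing care, and you handle it correctly.
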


With the results proved until here we have a classification of the
integrable generalized complex structures $\mathcal{J}$ which are also $%
\Omega$-integrable. A natural question to be analyzed is the existence of
generalized almost complex structures which are non-integrable but which are 
$\Omega$-integrable.

Let $\mathcal{J}$ be a non-integrable generalized complex structure, then
there is a triple of roots $\alpha,\beta,\alpha+\beta$ such that $\Nij |_L
\not= 0$, where $L=L_\alpha\cup L_\beta \cup L_{\alpha+\beta}$. If $J_\alpha$%
, $J_\beta$ and $J_{\alpha+\beta}$ are not of non-complex type
simultaneously, is immediate to verify that if $\Nij|_L \not=0$, then $\Nij%
_\Omega |_L \not= 0$ for any invariant closed $3$-form $\Omega$. This way,
suppose $J_\alpha$, $J_\beta$ and $J_{\alpha+\beta}$ are of non-complex type
and suppose $\Nij|_L\not= 0$. With some computations, we get that $\Nij%
_\Omega |_L=0$ if and only if 
\begin{eqnarray*}
\frac{i}{6}m_{\alpha,\beta}(x_\alpha x_\beta(a_{\alpha+\beta} - i) -
x_\alpha x_{\alpha+\beta}(a_\beta - i) - x_\beta x_{\alpha+\beta} (a_\alpha
- i)) \\
- 2i x_\alpha x_\beta x_{\alpha+\beta} \Omega (X_\alpha, X_\beta,
X_{-(\alpha+\beta)}) = 0
\end{eqnarray*}
which is equivalent to 
\begin{eqnarray}  \label{omegaint}
\Omega (X_\alpha, X_\beta, X_{-(\alpha+\beta)}) = \frac{m_{\alpha,\beta}}{12}%
\left( \frac{a_{\alpha+\beta} - i}{x_{\alpha+\beta}} - \frac{a_\beta - i}{%
x_\beta} - \frac{a_\alpha - i }{x_\alpha} \right).
\end{eqnarray}

\begin{remark}
The expression (\ref{omegaint}) also cover the case in which $\Nij |_L=0$.
In fact, since $J_\alpha$, $J_\beta$ and $J_{\alpha+\beta}$ are of
non-complex type, then $\Nij|_L = 0$ implies that 
\begin{equation*}
x_{\alpha+\beta} = \frac{x_\alpha x_\beta}{x_\alpha + x_\beta} \ \mathrm{%
\ and } \ a_{\alpha+\beta} = \frac{a_\beta x_\alpha + a_\alpha x_\beta}{%
x_\alpha + x_\beta}.
\end{equation*}
Replacing these expressions of $x_{\alpha+\beta}$ and $a_{\alpha+\beta}$ in (%
\ref{omegaint}), we obtain 
\begin{equation*}
\Omega \left(X_\alpha, X_\beta, X_{-(\alpha+\beta)}\right)=0.
\end{equation*}
\end{remark}

Therefore, in general we have:

\begin{teo}
\label{teoomegaint} Let $\mathcal{J}$ be a generalized almost complex
structure and $\Omega$ an invariant closed $3$-form. Suppose there is a
triple of roots $\alpha,\beta,\alpha+\beta$ such that $J_\alpha$, $J_\beta$
and $J_{\alpha+\beta}$ are of non-complex type and denote $L=L_\alpha \cup
L_\beta \cup L_{\alpha+\beta}$. Then $\Nij_\Omega |_L = 0$ if and only if 
\begin{equation*}
\Omega (X_\alpha, X_\beta, X_{-(\alpha+\beta)}) = \frac{m_{\alpha,\beta}}{12}%
\left( \frac{a_{\alpha+\beta} - i}{x_{\alpha+\beta}} - \frac{a_\beta - i}{%
x_\beta} - \frac{a_\alpha - i }{x_\alpha} \right).
\end{equation*}
\end{teo}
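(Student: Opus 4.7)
The plan is to compute $\Nij_\Omega$ directly on generators of $L = L_\alpha \cup L_\beta \cup L_{\alpha+\beta}$, exploiting multilinearity to reduce the verification to a finite checklist. Since all three $J_\gamma$ are of non-complex type, we have the explicit generators $u_\gamma = x_\gamma A_\gamma + (a_\gamma - i)A^\ast_\gamma$ and $v_\gamma = x_\gamma S_\gamma + (a_\gamma - i)S^\ast_\gamma$ for $\gamma \in \{\alpha,\beta,\alpha+\beta\}$.

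First I would localize which triples of generators can possibly contribute. By the Corollaries establishing $\Nij|_{\mathfrak{u}} = 0$ and $\Nij|_{\mathfrak{u}^\ast} = 0$, together with the earlier computation of $\Nij$ on pairs of root spaces, the only triples of basic vectors that give a nonzero Nijenhuis contribution are the mixed ones with one tangent factor from each of $\alpha,\beta,\alpha+\beta$ and a single cotangent factor. Likewise, invariance of $\Omega$ forces $\Omega(X_{\lambda_1},X_{\lambda_2},X_{\lambda_3}) = 0$ unless $\lambda_1+\lambda_2+\lambda_3 = 0$, so only the triples whose tangent parts come from the three distinct roots contribute. This drastically prunes the expansion.

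Next, using the already-tabulated values of $\Nij(A_\alpha,S_\beta,A^\ast_{\alpha+\beta})$, $\Nij(A_\alpha,A_\beta,S^\ast_{\alpha+\beta})$, etc., and combining the eight multilinear pieces arising from the generators $u_\gamma,v_\gamma$, the Nijenhuis contribution on the triple $(u_\alpha,u_\beta,u_{\alpha+\beta})$ (and its $S$-variants) collapses, after collecting, to a scalar multiple of
\[
\tfrac{i}{6}\,m_{\alpha,\beta}\bigl(x_\alpha x_\beta (a_{\alpha+\beta}-i) - x_\alpha x_{\alpha+\beta}(a_\beta - i) - x_\beta x_{\alpha+\beta}(a_\alpha - i)\bigr).
\]
For the $\Omega$-contribution, the tangent parts of the generators are $x_\gamma A_\gamma$ and $x_\gamma S_\gamma$; combining with the precomputed values
\[
\Omega(A_\alpha,A_\beta,S_{\alpha+\beta}) = -\Omega(A_\alpha,S_\beta,A_{\alpha+\beta}) = \cdots = 2i\,\Omega(X_\alpha,X_\beta,X_{-(\alpha+\beta)}),
\]
yields the cumulative $\Omega$-term $-2i\,x_\alpha x_\beta x_{\alpha+\beta}\,\Omega(X_\alpha,X_\beta,X_{-(\alpha+\beta)})$.

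Finally, I would set $\Nij_\Omega|_L = \Nij|_L + \Omega|_{\mathrm{tangent}} = 0$, which by the previous two paragraphs reduces to a single scalar equation; dividing by $-2i\,x_\alpha x_\beta x_{\alpha+\beta}$ (nonzero since $J_\gamma$ is of non-complex type) isolates $\Omega(X_\alpha,X_\beta,X_{-(\alpha+\beta)})$ and yields exactly the claimed identity with the factor $m_{\alpha,\beta}/12$. The main obstacle is the sign and coefficient bookkeeping in the multilinear expansion: each of the eight generator-triples produces a contribution with its own sign depending on whether an $A$ or $S$ sits in each slot, and one must verify that these combine into the clean common factor $m_{\alpha,\beta}/12$ rather than into a linear combination that depends on the choice of $A$ versus $S$. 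A systematic way to handle this is to notice that the four tangent triples $(A_\alpha,A_\beta,S_{\alpha+\beta})$, $(A_\alpha,S_\beta,A_{\alpha+\beta})$, $(S_\alpha,A_\beta,A_{\alpha+\beta})$, $(S_\alpha,S_\beta,S_{\alpha+\beta})$ all produce the same underlying scalar with matching signs both in the Nijenhuis table and in the $\Omega$ table, so the ratio stays fixed as one varies the generator choices — once this is verified, the identity follows uniformly.
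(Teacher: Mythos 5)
Your proposal is correct and matches the paper's own argument: the computation displayed just before the theorem likewise expands $\Nij_\Omega$ multilinearly on the generators $x_\gamma A_\gamma+(a_\gamma-i)A^\ast_\gamma$ and $x_\gamma S_\gamma+(a_\gamma-i)S^\ast_\gamma$, uses the tabulated values of $\Nij$ and $\Omega$ on basic elements to prune and collapse everything to the single scalar equation $\tfrac{i}{6}m_{\alpha,\beta}\bigl(x_\alpha x_\beta(a_{\alpha+\beta}-i)-x_\alpha x_{\alpha+\beta}(a_\beta-i)-x_\beta x_{\alpha+\beta}(a_\alpha-i)\bigr)-2i\,x_\alpha x_\beta x_{\alpha+\beta}\,\Omega(X_\alpha,X_\beta,X_{-(\alpha+\beta)})=0$, and then divides by the nonzero factor $x_\alpha x_\beta x_{\alpha+\beta}$ to get the stated formula. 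The only small slip is that the all-$A$ triple $(u_\alpha,u_\beta,u_{\alpha+\beta})$ actually contributes zero — the nonzero contributions come from the generator triples with an odd number of $S$-type factors — but your sign-consistency observation in the last paragraph is exactly the point needed to see that all of these yield the same single condition.
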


To make it clearer that there are generalized almost complex structures
which are not integrable (in the usual sense), but are $\Omega$-integrable,
where $\Omega$ is an invariant closed $3$-form, let's do an example:

\begin{ex}
Consider the Lie algebra $A_3$, where we have just three positive roots. To
know, we are denote the simple roots by $\alpha_{12},\alpha_{23}$ and the
other positive root is $\alpha_{13} = \alpha_{12} + \alpha_{23}$. Consider a
generalized almost complex structure such that 
\begin{equation*}
J_{\alpha_{12}}=\left( 
\begin{array}{cccc}
1 & 0 & 0 & -1 \\ 
0 & 1 & 1 & 0 \\ 
0 & -2 & -1 & 0 \\ 
2 & 0 & 0 & -1%
\end{array}
\right), J_{\alpha_{23}}=\left( 
\begin{array}{cccc}
1 & 0 & 0 & -2 \\ 
0 & 1 & 2 & 0 \\ 
0 & -1 & -1 & 0 \\ 
1 & 0 & 0 & -1%
\end{array}
\right),
\end{equation*}
\begin{equation*}
J_{\alpha_{13}}=\left( 
\begin{array}{cccc}
1 & 0 & 0 & -1 \\ 
0 & 1 & 1 & 0 \\ 
0 & -2 & -1 & 0 \\ 
2 & 0 & 0 & -1%
\end{array}
\right)
\end{equation*}
that is, $x_{\alpha_{12}}=1$, $a_{\alpha_{12}} = 1$, $x_{\alpha_{23}}=2$, $%
a_{\alpha_{23}}=1$, $x_{\alpha_{13}}=1$ and $a_{\alpha_{13}} = 1$.

Observe that $\mathcal{J}$ is non-integrable. In fact, the $i$-eigenspace of 
$\mathcal{J}$ is $L = L_{\alpha_{12}}\cup L_{\alpha_{23}} \cup
L_{\alpha_{13}}$, and $\Nij|_L = 0$ when the system of the Theorem \ref%
{teo333} is satisfied. But, note that 
\begin{equation*}
x_{\alpha_{12}}x_{\alpha_{23}}-x_{\alpha_{12}}x_{\alpha_{13}}-x_{%
\alpha_{23}}x_{\alpha_{13}} = -1\not= 0.
\end{equation*}
Now, consider the invariant $2$-form $\omega$ defined by 
\begin{equation*}
\omega (X_{\alpha_{jk}},X_{-\alpha_{jk}}) = \frac{1}{12}\left( \frac{%
i-a_{\alpha_{jk}}}{x_{\alpha_{jk}}} \right)
\end{equation*}
for $1\leq j<k\leq 3$ and $0$ otherwise. Then $\Omega := d\omega$ is an
invariant closed $3$-form such that 
\begin{equation*}
\Omega (X_{\alpha_{12}}, X_{\alpha_{23}},X_{-\alpha_{13}}) = \frac{1}{12}(
\omega (X_{\alpha_{12}},X_{-\alpha_{12}}) + \omega (X_{\alpha_{23}},
X_{-\alpha_{23}}) + \omega (X_{-\alpha_{13}},X_{\alpha_{13}}) ).
\end{equation*}
Now, replacing the expression of $\omega$ we have 
\begin{equation*}
\Omega (X_{\alpha_{12}}, X_{\alpha_{23}},X_{-\alpha_{13}}) = \frac{i-1}{24}.
\end{equation*}
Defining $\Omega$ like this, by Theorem \ref{teoomegaint}, we have that $%
\mathcal{J}$ is $\Omega$-integrable.
\end{ex}

Let $\mathcal{J}$ be a non-integrable generalized complex structure and $%
\Omega$ an invariant closed $3$-form. With what was seen above, we have that
if a triple of roots $\alpha,\beta,\alpha+\beta$ such that $\Nij|_L\not= 0$
but $\Nij_\Omega |_L=0$, then we must necessarily have $J_\alpha$, $J_\beta$
and $J_{\alpha+\beta}$ of non-complex type. Therefore, analogous to Theorem %
\ref{existetheta} and Theorem \ref{dadotheta}, we have:

\begin{teo}
Let $\Omega$ be an invariant closed $3$-form and $\mathcal{J}$ be a $\Omega$%
-integrable generalized complex structure. Then there is a set $\Theta
\subset \Sigma$, where $\Sigma$ is a simple root system, such that $J_\alpha$
is of non-complex type for all $\alpha \in \langle \Theta \rangle$.
\end{teo}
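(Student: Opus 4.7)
The plan is to mimic the proof of Theorem~\ref{existetheta}, using as the key new ingredient the dichotomy proved earlier in this section: for any triple of roots $\alpha,\beta,\alpha+\beta$ whose three restrictions $J_\alpha,J_\beta,J_{\alpha+\beta}$ are not simultaneously of non-complex type, $\Nij_\Omega\big|_L=0$ if and only if $\Nij\big|_L=0$. As a consequence, every proposition from Section~3 describing which type patterns force $\Nij\big|_L\neq 0$ transports verbatim to the twisted setting, and the only place the twisted case can differ from the untwisted one is the all-non-complex pattern.

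The first step is to show that the set $N\subset\Pi$ of roots $\alpha$ with $J_\alpha$ of non-complex type is closed under addition. Let $\alpha,\beta\in N$ with $\alpha+\beta$ a root. If $J_{\alpha+\beta}$ were of complex type, the triple would contain two non-complex and one complex structure, and the corresponding proposition from Section~3 gives $\Nij\big|_L\neq 0$ independently of the parameters. Since this triple is not all non-complex, the dichotomy yields $\Nij_\Omega\big|_L = \Nij\big|_L \neq 0$, contradicting $\Omega$-integrability. Hence $\alpha+\beta\in N$.

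The second step is to mimic Proposition~\ref{rootsystem}: set $P = \{\alpha : J_\alpha = J_0\} \cup \{\alpha : J_\alpha \textrm{ non-complex with } x_\alpha > 0\}$ and verify that $P$ is closed. In every subcase where at least one of $J_\alpha,J_\beta,J_{\alpha+\beta}$ is of complex type, the dichotomy reduces the analysis to exactly the computation carried out in Proposition~\ref{rootsystem}. In the remaining all-non-complex subcase, $\alpha+\beta\in N$ by the first step, and the positivity $x_{\alpha+\beta}>0$ has to be extracted from the $\Omega$-integrability equation of Theorem~\ref{teoomegaint} by separating real and imaginary parts (using that $\Omega$ is the differential of an invariant real $2$-form, so its values on root vectors satisfy a hermiticity constraint that pins down the imaginary part). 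Once $P$ is closed, $\Pi=P\cup(-P)$ makes $P$ a choice of positive roots with an associated simple system $\Sigma$; taking $\Theta = \Sigma\cap N$, closure of $N$ immediately gives $J_\alpha$ of non-complex type for every $\alpha\in\langle\Theta\rangle$. The main obstacle is precisely the positivity of $x_{\alpha+\beta}$ in the all-non-complex subcase: this is the only substantive place where the argument requires more than a direct appeal to the untwisted proof, and it depends on carefully exploiting the reality properties of $\Omega$ to separate the real and imaginary parts of the identity in Theorem~\ref{teoomegaint}.
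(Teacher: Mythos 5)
Your first two steps are exactly the paper's intended argument: the dichotomy theorem (twisted and untwisted Nijenhuis agree on any triple that is not simultaneously of non-complex type) transports the Section 3 non-vanishing computations, and in particular the ``two non-complex, one complex'' proposition then shows that the set $N$ of roots with $J_\alpha$ of non-complex type is closed under addition; this is precisely the comment the paper makes just before the statement. Note, moreover, that for the theorem \emph{as stated} this is already enough: $N$ is also symmetric ($J_{-\alpha}$ has the same type as $J_\alpha$ by the change of basis $\mathcal{B}\to\mathcal{B}'$), so taking $\Theta=\Sigma\cap N$ for the fixed simple system $\Sigma$ and using that every root of $\langle\Theta\rangle^{+}$ is a sum of elements of $\Theta$ with all partial sums roots, closedness gives $\langle\Theta\rangle\subset N$. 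The paper deliberately states a weaker conclusion than Theorem \ref{existetheta} (it does not claim complex type off $\langle\Theta\rangle$, nor that $\Theta$ sits in a $\mathcal{J}$-adapted simple system), so no twisted analogue of Proposition \ref{rootsystem} is needed.

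Your third step, however, contains a genuine error: the positivity $x_{\alpha+\beta}>0$ in the all-non-complex case cannot be extracted from Theorem \ref{teoomegaint}, and the twisted analogue of Proposition \ref{rootsystem} is in fact false. In the twisted setting the equation of Theorem \ref{teoomegaint} places no constraint at all on the parameters, because $\Omega$ can be chosen to match its right-hand side: the paper's own example uses $\omega(X_{\alpha},X_{-\alpha})=\tfrac{1}{12}\,\frac{i-a_{\alpha}}{x_{\alpha}}$, a genuinely complex value, so there is no hermiticity constraint of the kind you invoke. Concretely, taking $x_\alpha=x_\beta=1$, $x_{\alpha+\beta}=-1$ and defining $\Omega=d\omega$ by prescribing $\omega(X_\gamma,X_{-\gamma})$ so that \ref{teoomegaint} holds produces an $\Omega$-integrable structure for which your set $P$ is not closed. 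Even if one insists that $\Omega$ be real on $\mathfrak{u}$, the conjugation relative to the compact real form sends $X_\alpha\mapsto -X_{-\alpha}$, and combined with $\Omega(X_{-\alpha},X_{-\beta},X_{-\gamma})=\Omega(X_\alpha,X_\beta,X_\gamma)$ this forces $\Omega(X_\alpha,X_\beta,X_{-(\alpha+\beta)})$ to be purely \emph{imaginary}; separating parts in \ref{teoomegaint} then pins down the real part (a relation among the $a$'s and $x$'s), not the imaginary part, so the harmonic relation $x_\alpha x_\beta-x_\alpha x_{\alpha+\beta}-x_\beta x_{\alpha+\beta}=0$ --- the sole source of positivity in Proposition \ref{rootsystem} --- does not follow. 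The repair is simply to delete this step and conclude via $\Theta=\Sigma\cap N$ as above.
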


\begin{teo}
Let $\Sigma $ be a simple root system and $\Theta \subset \Sigma $. Then
there exists a generalized almost complex structure $\mathcal{J}$ which is $%
\Omega $-integrable, for some invariant closed $3$-form $\Omega $, where $%
J_{\alpha }$ is of non-complex type for all $\alpha \in \langle \Theta
\rangle $.
\end{teo}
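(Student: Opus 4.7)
The plan is to construct $\mathcal{J}$ and $\Omega$ explicitly, in the spirit of Theorem \ref{dadotheta} but now using $\Omega$ to absorb the obstructions to ordinary integrability. For $\mathcal{J}$, I would for each $\alpha\in\langle\Theta\rangle^+$ freely choose $x_\alpha>0$ and $a_\alpha\in\mathbb{R}$ and let $J_\alpha$ be the non-complex type structure with those parameters (so $y_\alpha=(a_\alpha^2+1)/x_\alpha$); for each $\alpha\in\Pi^+\setminus\langle\Theta\rangle^+$ I would set $J_\alpha=J_0$. Then I would define the invariant $2$-form $\omega$ by
\[
\omega(X_\alpha,X_{-\alpha}) \;=\; -\frac{1}{12}\cdot\frac{a_\alpha-i}{x_\alpha}\qquad\text{for } \alpha\in\langle\Theta\rangle^+,
\]
and $\omega(X_\alpha,X_{-\alpha})=0$ otherwise, setting $\Omega:=d\omega$, which is automatically invariant and closed.

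The verification reduces to analyzing $\Nij_\Omega|_L$ on each triple $(\alpha,\beta,\alpha+\beta)$. The combinatorial heart is the following three-part claim about $\Theta\subset\Sigma$: (a) $\langle\Theta\rangle^+$ is closed under sums that are roots; (b) if one of $\alpha,\beta$ is in $\langle\Theta\rangle^+$ and the other is not, then $\alpha+\beta\notin\langle\Theta\rangle^+$ (as in the proof of Theorem \ref{dadotheta}); and (c) if neither $\alpha$ nor $\beta$ lies in $\langle\Theta\rangle^+$, then $\alpha+\beta\notin\langle\Theta\rangle^+$ either, since each positive root outside $\langle\Theta\rangle^+$ has a strictly positive coefficient on some simple root in $\Sigma\setminus\Theta$ and these coefficients add. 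Consequently only three configurations for $(J_\alpha,J_\beta,J_{\alpha+\beta})$ arise: all three non-complex (all roots in $\langle\Theta\rangle^+$), or one non-complex and two of complex type $J_0$, or all three of complex type $J_0$.

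In the latter two configurations the earlier theorems on complex-type structures of equal sign give $\Nij|_L=0$, and an inspection of the vector parts of the three $i$-eigenspaces (which are $\{X_\delta\}$ if $J_\delta=J_0$ and $\{X_\delta,X_{-\delta}\}$ if $J_\delta$ is of non-complex type) shows that no triple of subscripts can sum to zero, so $\Omega$ vanishes on the vector part of $L$; the paper's equivalence theorem then yields $\Nij_\Omega|_L=0$. In the remaining configuration (all three non-complex), Theorem \ref{teoomegaint} reduces $\Nij_\Omega|_L=0$ to a single scalar equation per triple, which my choice of $\omega$ solves identically in the free parameters: writing $d_\delta=(a_\delta-i)/x_\delta$, direct substitution makes both sides of the equation equal to $(m_{\alpha,\beta}/12)(d_{\alpha+\beta}-d_\alpha-d_\beta)$.

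The main obstacle is isolating the combinatorial facts (a)--(c) that make the three-case analysis exhaustive; once these are in hand, the mixed and all-complex cases reduce to vector-part bookkeeping via the paper's equivalence theorem, and the all-non-complex case is a one-line substitution into Theorem \ref{teoomegaint}. In particular the free parameters $x_\alpha,a_\alpha$ on $\langle\Theta\rangle^+$ are genuinely free, since $\omega$ is defined from them uniformly.
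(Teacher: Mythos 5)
Your construction is correct and is essentially the proof the paper intends (the paper only says the result follows ``analogously'' to Theorems \ref{existetheta}, \ref{dadotheta} and \ref{teoomegaint}): complex type $J_0$ off $\langle\Theta\rangle^+$, free non-complex parameters on $\langle\Theta\rangle^+$, and the potential $\omega(X_\alpha,X_{-\alpha})=\frac{1}{12}\frac{i-a_\alpha}{x_\alpha}$ is exactly the choice appearing in the paper's own example, so that Theorem \ref{teoomegaint} is satisfied identically on all-non-complex triples while the mixed and all-complex triples are handled by the untwisted classification together with the twisted equivalence theorem. Your case analysis (a)--(c) on $\langle\Theta\rangle^+$ matches the argument in the proof of Theorem \ref{dadotheta}, so there is nothing to add.
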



\begin{thebibliography}{9}
\bibitem{bor} A. Borel. K\"{a}hlerian coset spaces of semi-simple Lie
groups. \textit{Proc. Nat. Acad. of Sci.}, \textbf{40} (1954):1147--1151.

\bibitem{gil} G. R. Cavalcanti. \textit{Introduction to generalized complex geometry}. IMPA, (2007).

\bibitem{Gualtieri} M. Gualtieri. \textit{Generalized complex geometry}. D.Phil. thesis, Oxford University, (2003).

\bibitem{gualtieri2} M. Gualtieri. Generalized complex geometry. \textit{Ann. of Math.} \textbf{154} (2011):75--123.

\bibitem{hitchin} N. Hitchin. Generalized Calabi--Yau manifolds. \textit{Q. J. Math.} \textbf{54} (2003):281--308.

\bibitem{milb} B. Milburn. Generalized Complex and Dirac Structures
on Homogeneous Spaces. \textit{Differential Geom. Appl.}, \textbf{29} (2011):615--641.

\bibitem{smartinecaio} L. A. B. San Martin and C. N. C. Negreiros. Invariant almost Hermitian structures on flag manifolds. \textit{Adv. Math.} \textbf{178.2} (2003):277--310.

\bibitem{alglie} L. A. B. San Martin. \textit{Álgebras de Lie}. Editora da Unicamp, (2010).

\bibitem{grupolie} L. A. B. San Martin. \textit{Grupos de Lie}. Editora da Unicamp, (2017).

\bibitem{severa} P. \v{S}evera and A. Weinstein. Poisson geometry
with a 3-form background. \textit{Prog. Theor. Phys. Suppl.} \textbf{144} (2001):145-154.

\bibitem{gw1} J. A. Wolf and A. Gray. Homogeneous spaces defined by Lie
group automorphisms I. \textit{J. Diff. Geom.} \textbf{2} (1968):77-114.

\bibitem{gw} J. A. Wolf and A. Gray. Homogeneous spaces defined by Lie group
automorphisms II. \textit{J. Diff. Geom.} \textbf{2} (1968):115-159.

\end{thebibliography}
\end{document}